\newcommand{\xvec}[1]{\mathbf{\boldsymbol{#1}}}
\newcommand{\xmat}[1]{\mathbf{\boldsymbol{#1}}}
\newcommand{\tvec}[1]{\underline{\xvec{#1}}}
\newcommand{\tmat}[1]{\underline{\xmat{#1}}}
\newcommand{\dx}{\Delta x}
\newcommand{\dy}{\Delta y}
\newcommand{\dt}{\Delta t}
\newcommand{\dtau}{\Delta \tau}
\newcommand{\iter}[2][i]{u_{#1}^{(#2)}}
\newcommand{\itvec}[1]{\xvec{u}^{(#1)}}
\newtheorem{theorem}{Theorem}
\newtheorem{lemma}[theorem]{Lemma}
\newtheorem{definition}{Definition}
\newtheorem{assumption}{Assumption}
\newtheorem{remark}{Remark}
\title{Conservative iterative methods for implicit discretizations of conservation laws}
\author{Philipp Birken$^{\mbox{\tiny\rm 1}}$, Viktor Linders$^{\mbox{\tiny\rm 1}}$}
\date{}
\begin{document}
\maketitle
\baselineskip=0.9
\normalbaselineskip
\vspace{-3pt}
\begin{center}{\footnotesize\em $^{\mbox{\tiny\rm 1}}$Centre for
    mathematical sciences, Lund University, Lund, Sweden.\\ email: philipp.birken\symbol{'100}na.lu.se \\ \qquad viktor.linders@math.lu.se }
\end{center}

\begin{abstract}
\noindent Conservation properties of iterative methods applied to implicit finite volume discretizations of nonlinear conservation laws are analyzed. It is shown that any consistent multistep or Runge-Kutta method is globally conservative. Further, it is shown that Newton's method, Krylov subspace methods and pseudo-time iterations are globally conservative while the Jacobi and Gauss-Seidel methods are not in general. If pseudo-time iterations using an explicit Runge-Kutta method are applied to a locally conservative discretization, then the resulting scheme is also locally conservative. However, the corresponding numerical flux can be inconsistent with the conservation law. We prove an extension of the Lax-Wendroff theorem, which reveals that numerical solutions based on these methods converge to weak solutions of a modified conservation law where the flux function is multiplied by a particular constant. This constant depends on the choice of Runge-Kutta method but is independent of both the conservation law and the discretization. Consistency is maintained by ensuring that this constant equals unity and a strategy for achieving this is presented. Experiments show that this strategy improves the convergence rate of the pseudo-time iterations.
\end{abstract}

{\it \noindent Keywords: Iterative methods, Conservation laws, Conservative numerical methods, Pseudo-time iterations, Lax-Wendroff theorem}


\section{Introduction}

Conservation laws arise ubiquitously in the modelling of physical phenomena and their discretizations have been the subject of intense study; see e.g. \cite[Chapter 1]{leveque1992numerical} and \cite[Chapter 1]{leveque2002finite}. They derive their name from the fact that they describe the conservation of some quantities of interest over time. In computational fluid dynamics (CFD), these quantities are usually mass, momentum and energy. For convenience we always refer to the conserved quantity as the "mass" in this paper.

Many numerical schemes have been designed to mimic mass conservation. Throughout, we refer to such schemes as \emph{globally conservative}. As special cases, \emph{locally conservative} schemes dictate that local variations in the mass propagate from one computational cell to neighboring ones without "skipping" any cells (typically referred to as "conservative schemes" in the literature). In particular, finite volume methods are designed in part upon this principle, although many other schemes possess the same property \cite{fisher2013high}. A major result on the topic is the Lax-Wendroff theorem, see e.g. \cite{lax1959systems}, which provides sufficient conditions for numerical solutions of convergent and locally conservative discretizations to converge to weak solutions of the corresponding conservation law.

For stiff problems, e.g. the simulation of wall bounded viscous flows, \cite{miranker1973numerical}, implicit time stepping methods are used. Such discretizations result in systems of linear or nonlinear equations. In the context of fluid flow simulations, these systems are sparse and very large. Their solutions are approximated using iterative methods, see e.g. \cite{bassi2000gmres, bassi2011optimal, cockburn2005locally, birken2013preconditioning, birken2008preconditioner, blom2016comparison, birken2010nonlinear, birken2019preconditioned, birken2019subcell, birken2021numerical} and the references therein.

It is natural to ask whether such approximate solutions satisfy the conservation properties upon which the discretizations are based, and in particular, whether a Lax-Wendroff type result is available. It is typically claimed that this is not the case, e.g. in \cite{junqueira2014study}, where a version of a Newton-Jacobi iteration was noted to violate global conservation. In this paper we set out to answer this question in greater detail for some well-known (families of) iterative methods. We consider conservative implicit space-time discretizations whose solutions in each time step are approximated by a fixed number of iterations with different iterative methods.

After introducing relevant concepts and definitions in section \ref{sec:definitions}, we investigate the global conservation of iterative methods in section \ref{sec:mass_conservation}. It is shown that Newton's method, Krylov subspace methods and pseudo-time iterations using Runge-Kutta (RK) methods are globally conservative if the initial guess has correct mass. However, the Jacobi and Gauss-Seidel methods in general are not. In section \ref{sec:local_conservation} we focus on pseudo-time iterations using explicit RK methods. We show that such methods are locally conservative. By fixing the number of iterations and considering the limit of infinitesimal space-time increments, an extension of the Lax-Wendroff theorem is given. It turns out that the numerical solution in general converges to a weak solution of a modified conservation law for which the flux function is multiplied by a particular constant. An expression for this modification constant is given that only depends on the choice of Runge-Kutta method and the selected pseudo-time steps. A technique for ensuring that the constant equals one is presented, thereby ensuring consistency of the resulting scheme. Experiments indicate that the new technique improvess the convergence rate of the pseudo-time iterations. Numerical tests corroborate these findings and further suggest that the results hold for systems of conservation laws in multiple dimensions. Conclusions are drawn in section \ref{sec:conclusions}.

Throughout the paper we separate quantities that belong to a spatial discretization from those that do not. Any vector living on a grid with $m$ cells is denoted by a lower case bold letter, e.g. $\xvec{u} \in \mathbb{R}^m$. Similarly, any matrix operating on such a vector is represented by a bold upper case letter, e.g. $\xmat{A} \in \mathbb{R}^{m \times m}$. In Section \ref{sec:local_conservation}, we consider $s$-stage Runge-Kutta methods. Vector quantities spanning these stages are denoted by bold and underlined lower case letters, e.g. $\tvec{b} \in \mathbb{R}^s$. Matrices operating on the stages are represented by bold and underlined upper case letters e.g. $\tmat{A} \in \mathbb{R}^{s \times s}$.


\section{Conservation and conservative discretizations} \label{sec:definitions}

In this paper we consider numerical methods for conservation laws, meaning partial differential equations of the form
\begin{equation} \label{eq:conservation_law}
\begin{aligned}
u_t + \nabla \cdot f(u) &= 0, \quad t \in (t_0,t_e], \quad x\in\Omega \subset \mathbb{R}^d, \\
u(x,t_0) &= u_0(x),
\end{aligned}
\end{equation}
possibly with additional boundary conditions. Here, the $k$th component $u_k$ of the vector $u \in \mathbb{R}^q$ represents the concentration of a quantity, i.e. the total amount of that quantity in a domain is given by $\int_{\Omega} u_k \text{d}x$. The conservation law states that this amount is only changed by flow across the boundary of the domain. Assuming that the net flow across the boundary is zero, we thus have for all $t \in (t_0,t_e]$ that
\begin{equation}\label{eq:conservation}
  \int_{\Omega} u(x,t) \text{d}x = \int_{\Omega} u_0(x) \text{d}x.
\end{equation}

In this paper we focus on scalar conservation laws in 1D, i.e. the case when $d = q = 1$ in \eqref{eq:conservation_law}. Numerical experiments in section \ref{sec:local_conservation} suggest that the results of the paper can be generalized to systems in multiple dimensions, however this task is deferred for future work. Further, we restrict our analysis to the Cauchy problem for \eqref{eq:conservation_law}, or the problem with periodic boundary conditions. Thus, the conservation law of interest becomes
\begin{equation} \label{eq:conservation_law_1D}
\begin{aligned}
u_t + f_x &= 0, \quad t \in (t_0,t_e], \quad x \in \Omega, \\
u(x,t_0) &= u_0(x),
\end{aligned}
\end{equation}
where either $\Omega = (-\infty, \infty)$ or $\Omega = (a,b]$ and $u(a) = u(b)$. Which case is considered will be clear from context.

A very successful line of research in computational fluid dynamics is to construct numerical methods that respect \eqref{eq:conservation} on a discrete level. Herein, we predominantly consider finite volume methods that utilize the implicit Euler method as temporal discretization. Let the computational grid be given by $(x_i, t_n) = (i \Delta x, n \Delta t)$ with $n \geq 0$. This grid may be either infinite or finite and periodic in space, i.e. $x_{i+m} = x_i$ for some positive integer $m$. We consider discretizations on the form
\begin{equation} \label{eq:FV}
    \frac{u_i^{n+1} - u_i^n}{\dt} + \frac{1}{\dx} \left( \hat{f}_{i+\frac{1}{2}} - \hat{f}_{i-\frac{1}{2}} \right) = 0.
\end{equation}
Here, $u_i^n$ approximates the solution $u(x,t_n)$ of \eqref{eq:conservation_law_1D} in the computational cell $\Omega_i$. The notation $\hat{f}_{i+\frac{1}{2}}(\xvec{u})$ is used as a placeholder for a generic numerical flux of the form $\hat{f}(u_{i-p}^{n+1},\dots,u_{i+q}^{n+1})$, where $p,q \in \mathbb{N}$ determine the bandwidth of the stencil. Multiplying \eqref{eq:FV} by $\dx$ and summing over all $i$ reveals that
\begin{equation} \label{eq:FV_global_conservation}
\dx \sum_i u_i^{n+1} = \dx \sum_i u_i^n.
\end{equation}
Comparing this with \eqref{eq:conservation} shows that the finite volume scheme discretely mimics conservation of mass.

\subsection{Local conservation}

It is the telescoping nature of the spatial terms in \eqref{eq:FV} that leads to the preceeding result. In the finite volume community, this property is typically referred to simply as conservation. However, to distinguish it from other mass conservative schemes, we will henceforth refer to this property as \emph{local conservation}:

\begin{definition}
A discretization of \eqref{eq:conservation_law} that can be written in the form \eqref{eq:FV} is said to be locally conservative.
\end{definition}

Local conservation plays a central role in the Lax-Wendroff theorem, which considers the Cauchy problem for \eqref{eq:conservation_law_1D}. First, the notion of \emph{consistency} must be specified:

\begin{definition} \label{def:consistency}
A numerical flux $\hat{f}$ is consistent with $f$ if it is Lipschitz continuous in each argument and if $\hat{f}(u,\dots,u) = f(u)$.
\end{definition}

A numerical flux function is \emph{inconsistent} if it fails to satisfy Definition \ref{def:consistency}. We will see examples of this in Section \ref{sec:local_conservation}.

The Lax-Wendroff theorem applies to the Cauchy problem for \eqref{eq:conservation_law_1D} and considers locally conservative discretizations with consistent numerical flux. If the numerical solution of such a scheme converges to a function $u$ in the limit of vanishing $\dx$ and $\dt$, the theorem provides sufficient conditions for $u$ to be a weak solution of the conservation law \eqref{eq:conservation_law} \cite[Chapter 12]{leveque1992numerical}. More precisely, consider a sequence of grids $(\Delta x_\ell, \Delta t_\ell)$ such that $\Delta x_\ell, \Delta t_\ell \rightarrow 0$ as $\ell \rightarrow \infty$. Let $\mathcal{U}_\ell(x,t)$ denote the piecewise constant function that takes the solution value $u_i^n$ in $(x_i, x_{i+1}] \times (t_{n-1}, t_n]$ on the $\ell$th grid. We make the following assumptions:

\begin{assumption} \label{assumptions} 
\hfill
\begin{enumerate}
    \item There is a function $u(x,t)$ such that over every bounded set $\Omega = [a,b] \times [0,T]$ in $x$-$t$ space,
    $$
    \| \mathcal{U}_\ell(x,t) - u(x,t) \|_{1,\Omega} \rightarrow 0 \quad \text{as} \quad \ell \rightarrow \infty.
    $$
    \item For each $T$ there is a constant $R > 0$ such that
    $$
    TV(\mathcal{U}_\ell(\cdot,t)) < R \quad \text{for all} \quad 0 \leq t \leq T, \quad \ell = 1, 2, \dots
    $$
\end{enumerate}
\end{assumption}

The first of these assumptions asserts that the numerical solution is convergent in the $L^1$-norm with limit $u$. The second one states that the total variation of the numerical solution remains bounded independently of the grid. The Lax-Wendroff theorem may now be stated as follows:

\begin{theorem}[Lax-Wendroff] \label{thm:Lax_Wendroff}
Consider the locally conservative discretization \eqref{eq:FV}, suppose that the numerical flux $\hat{f}$ is consistent and that Assumption \ref{assumptions} is satisfied. Then, $u(x,t)$ is a weak solution of \eqref{eq:conservation_law}.
\end{theorem}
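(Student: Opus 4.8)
The plan is to verify directly that the limit function $u$ satisfies the weak formulation of \eqref{eq:conservation_law_1D}, namely that for every test function $\phi \in C_0^1(\mathbb{R} \times [0,\infty))$ with compact support,
$$
\int_0^\infty \!\! \int_{-\infty}^\infty \left( u \phi_t + f(u) \phi_x \right) \mathrm{d}x \, \mathrm{d}t + \int_{-\infty}^\infty u_0(x) \phi(x,0) \, \mathrm{d}x = 0.
$$
First I would multiply the scheme \eqref{eq:FV} by $\dx \, \dt \, \phi_i^n$, where $\phi_i^n := \phi(x_i, t_n)$, and sum over all $i \in \mathbb{Z}$ and $n \geq 0$. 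Because $\phi$ has compact support, all sums are finite and the boundary contributions at spatial infinity vanish.

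The key algebraic step is summation by parts (Abel summation), applied separately to the temporal and spatial differences. Summing by parts in $n$ transfers the difference onto $\phi$, producing a discrete analogue of $\int\!\int u \phi_t$ together with an initial-data term $\sum_i \dx \, u_i^0 \phi_i^0$ arising at $n=0$. Summing by parts in $i$ transfers the flux difference onto $\phi$, producing a term of the form $\sum_{n,i} \dx \, \dt \, \hat{f}_{i+\frac{1}{2}} \frac{\phi_{i+1}^n - \phi_i^n}{\dx}$. Since $\frac{\phi_i^n - \phi_i^{n-1}}{\dt} \to \phi_t$ and $\frac{\phi_{i+1}^n - \phi_i^n}{\dx} \to \phi_x$ uniformly, while $\mathcal{U}_\ell \to u$ in $L^1$ by Assumption \ref{assumptions}, the temporal sum and the initial-data sum converge to $\int\!\int u\phi_t$ and $\int u_0 \phi(\cdot,0)$ respectively.

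The main obstacle is the flux term: I must show that $\hat{f}_{i+\frac{1}{2}}$ may be replaced by $f(u_i^n)$ with vanishing error, so that the spatial sum converges to $\int\!\int f(u)\phi_x$. Here consistency is essential. Using $\hat f(u,\dots,u)=f(u)$ and Lipschitz continuity with constant $L$,
$$
\bigl|\hat{f}_{i+\frac{1}{2}} - f(u_i^n)\bigr| = \bigl|\hat{f}(u_{i-p},\dots,u_{i+q}) - \hat{f}(u_i,\dots,u_i)\bigr| \leq L \sum_{j=-p}^{q} |u_{i+j} - u_i|,
$$
where the fixed time level of the flux arguments (implicit in \eqref{eq:FV}) is immaterial in the limit. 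Summing over $i$ and bounding each telescoped difference by the total variation shows that $\sum_i |\hat{f}_{i+\frac{1}{2}} - f(u_i^n)|$ is controlled by $TV(\mathcal{U}_\ell(\cdot,t_n))$, which is uniformly bounded by $R$ via Assumption \ref{assumptions}. Consequently the replacement error is bounded by a constant multiple of $\dx \cdot T \cdot R \cdot \|\phi_x\|_\infty$ and vanishes as $\ell \to \infty$.

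Once the flux is replaced by $f(u_i^n)$, the $L^1$ convergence together with Lipschitz continuity of $f$ yields $f(\mathcal{U}_\ell) \to f(u)$ in $L^1_{\mathrm{loc}}$, so the spatial sum converges to $\int\!\int f(u)\phi_x$. Collecting all limits gives the weak formulation, establishing that $u$ is a weak solution of \eqref{eq:conservation_law}. The delicate point throughout is that bounded total variation is precisely what converts the pointwise Lipschitz estimate into an $L^1$-summable bound; without it, the consistency error could fail to vanish and the argument would break down.
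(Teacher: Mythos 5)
Your argument is correct and is essentially the classical Lax--Wendroff proof that the paper itself does not reproduce but cites from LeVeque; the same skeleton (test function, double sum, summation by parts in $n$ and $i$, separate treatment of the temporal, flux and initial-data terms) is what the paper follows in its proof of the extended Theorem \ref{thm:LW}. The one place where you genuinely deviate is the treatment of the flux-consistency error: the textbook route (mirrored in the paper's Theorem \ref{thm:LW}) shows that $|\hat f_{i+\frac12}-f(\mathcal{U}_\ell(x,t))|$ tends to zero for almost every $x$ via the total-variation bound and then invokes dominated convergence, whereas you sum the Lipschitz estimate over $i$ and bound the whole replacement error by $C\,\dx\,T\,R\,\|\phi_x\|_\infty$. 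Your version is quantitatively sharper (it exhibits an explicit $O(\dx)$ rate for the consistency defect) and avoids the measure-theoretic step, at the modest cost of requiring the TV bound globally in $i$ rather than locally; both are valid. The only point you gloss over is the implicit time level: in \eqref{eq:FV} the flux arguments live at $t_{n+1}$ while your replacement target is $f(u_i^n)$, and Assumption \ref{assumptions} bounds spatial, not temporal, variation. The clean fix is to replace $\hat f_{i+\frac12}$ by $f(u_i^{n+1})$ instead (your TV argument applies verbatim at level $n+1$) and then observe that the time-shifted function $\mathcal{U}_\ell(\cdot,\cdot+\dt_\ell)$ still converges to $u$ in $L^1_{\mathrm{loc}}$ because $\dt_\ell\to 0$ and translation is continuous in $L^1$; with that one line added, the proof is complete.
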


The theorem and its assumptions require that the linear or nonlinear systems arising in \eqref{eq:FV} are solved exactly. However, in practice these systems will be solved approximately using iterative methods. The natural question that now arises is: Do iterative solvers maintain the convergence to weak solutions?

\subsection{Global conservation}

To answer this question we must establish if iterative methods preserve local conservation. However, local conservation is a special case of the more general notion of \emph{global conservation}. Let us therefore consider a finite and periodic grid. Discretizing \eqref{eq:conservation_law_1D} in space on a mesh with cells $\Omega_i$ of volume $|\Omega_i|$, we arrive at an initial value problem
\begin{equation} \label{eq:semiDiscretization}
\begin{aligned}
\xvec{u}_t &= \hat{\xvec{f}}(\xvec{u}), \quad t \in (t_0,t_e], \\
\xvec{u}(t_0) &= \xvec{u}_0,
\end{aligned}
\end{equation}
with $\xvec{u}(t), \hat{\xvec{f}} \in \mathbb{R}^m$.

\begin{definition} \label{def:conservative}
A discretization $\hat{\xvec{f}} = (\hat{f}_1(\xvec{u}), \dots, \hat{f}_m(\xvec{u}))^\top$ is globally conservative if
$$
\sum_{i=1}^m |\Omega_i| \hat{f}_i(\xvec{u}) = 0.
$$
\end{definition}

Global conservation implies that \eqref{eq:conservation} is fulfilled on a semidiscrete level by \eqref{eq:semiDiscretization}:
$$
\sum_{i=1}^m|\Omega_i|u_i(t)=\sum_{i=1}^m|\Omega_i|u_i(t_0), \quad \forall t \in [t_0,t_e].
$$

Suppose next that a time integration method is used to solve \eqref{eq:semiDiscretization}.

\begin{definition} \label{def:conservativeTimeIntegration}
A time integration method applied to the conservative semidiscretization \eqref{eq:semiDiscretization} is globally conservative if
\begin{equation*}
\sum_{i=1}^m|\Omega_i|u_i^{n+1}=\sum_{i=1}^m|\Omega_i|u_i^n
\end{equation*}
holds for each time step $n=0,1,\dots$
\end{definition}

Note that the conservation property \eqref{eq:FV_global_conservation} derived for the locally conservative finite volume discretization \eqref{eq:FV} is a special case of Definition \ref{def:conservativeTimeIntegration}, where $\Omega_i = \dx$ for every $i$. Global conservation is thus necessary for local conservation, which in turn is necessary for the Lax-Wendroff Theorem.

In the CFD community, one hears from time to time the claim that implicit methods are not conservative. However, this is not true: All consistent linear multistep methods are globally conservative. To see this, consider a generic $s$-step linear multistep method
\begin{equation} \label{eq:multi_step}
\sum_{j=0}^s a_j \xvec{u}^{n+j} = \dt \sum_{j=0}^s b_j \hat{\xvec{f}}(\xvec{u}^{n+j}),
\end{equation}
where $a_j$ and $b_j$ are given method dependent coefficients. Suppose that $\sum_{i=1}^m |\Omega_i| u_i^{n+j} = \sum_{i=1}^m |\Omega_i| u_i^0$  for $j = 0, \dots, s-1$. Multiplying \eqref{eq:multi_step} by $|\Omega_i|$ and summing over all cells gives
\begin{align*}
\sum_{i=1}^m |\Omega_i| a_s u_i^{n+s} &= -\sum_{j=0}^{s-1} a_j \sum_{i=1}^m |\Omega_i| u_i^{n+j} + \dt \sum_{j=0}^s b_j \underbrace{\sum_{i=1}^m |\Omega_i| \hat{f}_i(\xvec{u}^{n+j})}_{=0} \\
&= -\sum_{i=1}^m |\Omega_i| u_i^0 \sum_{j=0}^{s-1} a_j.
\end{align*}
Consistent multistep methods satisfy $a_s = 1$ and $a_0 + \dots + a_{s-1} = -1$. Hence, global conservation is ensured.

Similarly, all Runge-Kutta (RK) methods are globally conservative. Consider an $s$-stage RK method
\begin{equation} \label{eq:RK_method}
\begin{aligned}
\xvec{u}^{n+1} &= \xvec{u}^n + \dt \sum_{j=1}^s b_j \xvec{k}_j, \\
\xvec{k}_j &= \hat{\xvec{f}}\left( \xvec{u}^n + \dt \sum_{l=1}^s a_{j,l} \xvec{k}_l \right), \quad j = 1, \dots, s,
\end{aligned}
\end{equation}
where $a_{j,l}$ and $b_j$ are given by the method. Since $\hat{\xvec{f}}$ is globally conservative it follows from Definition \ref{def:conservative} that the mass of $\xvec{k}_j$ is zero and therefore that
$$
\sum_{i=1}^m |\Omega_i| u_i^{n+1} = \sum_{i=1}^m |\Omega_i| u_i^n + \dt \sum_{j=1}^s b_j \sum_{i=1}^m |\Omega_i| k_{j_i} = \sum_{i=1}^m |\Omega_i| u_i^n.
$$
Hence, RK methods are globally conservative.

Exponential integrators have been proven to be conservative as well \cite{birken2016stability}, essentially due to the following useful lemma:

\begin{lemma}\label{lemma:conservativeJacobian}
For any $\xvec{y} \in \mathbb{R}^m$ the Jacobian $\hat{\xvec{f}}'(\xvec{u}) = \frac{\partial \hat{\xvec{f}}}{\partial \xvec{u}} \in \mathbb{R}^{m \times m}$ of the globally conservative discretization $\hat{\xvec{f}}(\xvec{u})$ satisfies
\begin{equation}
 \sum_{i=1}^m|\Omega_i| \left( \hat{\xvec{f}}' \xvec{y} \right)_i=0.
\end{equation}
\end{lemma}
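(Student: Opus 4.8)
The plan is to exploit the fact that the global conservation property from Definition \ref{def:conservative} holds for \emph{every} argument $\xvec{u} \in \mathbb{R}^m$, not merely at an isolated point. This promotes it from a pointwise constraint to an identity between functions of $\xvec{u}$, which can then be differentiated. To make this transparent, I would first introduce the weight vector $\xvec{w} \in \mathbb{R}^m$ with entries $w_i = |\Omega_i|$, so that Definition \ref{def:conservative} reads compactly as $\xvec{w}^\top \hat{\xvec{f}}(\xvec{u}) = 0$ for all $\xvec{u}$.

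Next I would differentiate this scalar identity with respect to $\xvec{u}$. Since the right-hand side vanishes identically, differentiating with respect to each component $u_k$ gives $\sum_{i=1}^m |\Omega_i| \, \partial \hat{f}_i / \partial u_k = 0$ for every $k = 1, \dots, m$. Collecting these $m$ equations into a single row identity yields $\xvec{w}^\top \hat{\xvec{f}}'(\xvec{u}) = \xvec{0}^\top$; in other words, $\xvec{w}$ is a left null vector of the Jacobian at every point $\xvec{u}$. The claim then follows immediately by right-multiplying with an arbitrary $\xvec{y}$:
$$\sum_{i=1}^m |\Omega_i| \left( \hat{\xvec{f}}' \xvec{y} \right)_i = \xvec{w}^\top \hat{\xvec{f}}' \xvec{y} = \xvec{0}^\top \xvec{y} = 0.$$

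I do not anticipate a genuine obstacle here, since the result is essentially an application of the chain rule to an identity. The only points worth flagging are the two hypotheses that make the argument legitimate: that $\hat{\xvec{f}}$ is differentiable so the Jacobian $\hat{\xvec{f}}'$ exists, and — crucially — that the conservation property of Definition \ref{def:conservative} holds for all $\xvec{u}$ rather than at a single state, which is exactly what permits differentiation. Geometrically, the lemma records that the mass functional annihilates the entire range of the Jacobian, which is precisely the infinitesimal counterpart of the global conservation identity and is what makes the conservation argument for RK and exponential integrators go through.
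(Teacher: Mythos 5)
Your argument is correct: since Definition \ref{def:conservative} is an identity in $\xvec{u}$, differentiating $\xvec{w}^\top \hat{\xvec{f}}(\xvec{u}) \equiv 0$ yields $\xvec{w}^\top \hat{\xvec{f}}'(\xvec{u}) = \xvec{0}^\top$, and the claim follows. The paper itself offers no proof, deferring entirely to a citation, but your derivation is the natural one and the hypotheses you flag (differentiability, and conservation holding for all states) are exactly the right ones.
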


\begin{proof}
See \cite{straub2017adopting}.
\end{proof}

In the remainder we also need the following result:

\begin{lemma}\label{lemma:conservativelinearsystem}
Let $\hat{\xmat{f}}'(\xvec{u}) \in \mathbb{R}^{m \times m}$ be the Jacobian of a conservative discretization $\hat{\xvec{f}}(\xvec{u})$ and let $\alpha$ be any scalar. The solution $\xvec{x}$ of a linear system of the form
\begin{equation} \label{eq:genericLinearSystem}
\left( \xmat{I} + \alpha \hat{\xmat{f}}' \right) \xvec{x} = \xvec{b},
\end{equation}
satisfies
$$
\sum_{i=1}^m |\Omega_i| x_i = \sum_{i=1}^m |\Omega_i| b_i.
$$
\end{lemma}

\begin{proof}
Multiplying the $i$th element of \eqref{eq:genericLinearSystem} by $|\Omega_i|$ and summing over all $i$ gives
$$
\sum_{i=1}^m |\Omega_i| b_i = \sum_{i=1}^m |\Omega_i| \left( \left( \xmat{I} + \alpha \hat{\xmat{f}}' \right) \xvec{x} \right)_i = \sum_{i=1}^m |\Omega_i| x_i + \alpha \sum_{i=1}^m |\Omega_i| \left( \hat{\xmat{f}}' \xvec{x} \right)_i.
$$
By Lemma \ref{lemma:conservativeJacobian} the final term on the right-hand side is zero.
\end{proof}


\section{Globally conservative iterative solvers} \label{sec:mass_conservation}

We begin by asking: Do iterative solvers respect global conservation? Since global conservation is necessary for local conservation, which in turn is used in Theorem \ref{thm:Lax_Wendroff}, a negative answer will immediately exclude the possibility to extend the Lax-Wendroff theorem to incorporate a particular iterative method.

Discretizing \eqref{eq:semiDiscretization} in time using the implicit Euler method results in a nonlinear equation system of the form
\begin{equation} \label{eq:nonlineq}
\xvec{u} - \alpha \hat{\xvec{f}}(\xvec{u}) = \xvec{u}^n.
\end{equation}
We concern ourselves with a set of iterates $\itvec{k}$ with limit $\xvec{u}^{n+1}$ as $k \rightarrow \infty$.

\begin{definition} \label{def:globally_conservative_iterative_method}
An iterative method with iterates $\itvec{k}$ is said to be globally conservative if 
$$
\sum_{i=1}^m |\Omega_i| \iter{k+1} = \sum_{i=1}^m |\Omega_i| u_i^n, \qquad k = 0, 1, \dots
$$
\end{definition}

In the following subsections we investigate the conservation properties of some of the most common iterative methods.

\subsection{Newton's Method}

Newton's method finds an approximate solution to \eqref{eq:nonlineq} by solving the sequence of linear systems
\begin{equation} \label{eq:NewtonIteration}
\begin{aligned}
\left( \xmat{I} - \alpha \hat{\xmat{f}}' \left( \itvec{k} \right) \right) \Delta \xvec{u} &= \xvec{u}^n - \itvec{k} + \alpha \hat{\xmat{f}}\left( \itvec{k} \right), \\
\itvec{k+1} &= \itvec{k} + \Delta \xvec{u},
\end{aligned}
\end{equation}
for  $k = 0, 1,\dots$ Since $\hat{\xmat{f}}$ is conservative it has zero mass by Definition \ref{def:conservative}. If $\itvec{k}$ and $\xvec{u}^n$ have the same mass, the entire right-hand side of the first equation in \eqref{eq:NewtonIteration} has zero mass. Hence, by Lemma \ref{lemma:conservativelinearsystem}, the increment $\Delta \xvec{u}$ also has zero mass. Thus, the mass of $\itvec{k+1}$ is the same as that of $\itvec{k}$, which by assumption is the same as that of $\xvec{u}^n$. Hence, Newton's method is globally conservative.

In large scale practical applications, each linear system in \eqref{eq:NewtonIteration} is solved approximately using other iterative methods. In the following, we consider a generic linear system
\begin{equation}\label{eq:lineq}
(\xmat{I} - \alpha \xmat{A}) \xvec{u} = \xvec{b},
\end{equation}
that may either represent a Newton iteration or a linear discretization of a conservation law. In line with Lemma \ref{lemma:conservativeJacobian}, it is assumed that the mass of $\xmat{A} \xvec{y}$ is zero for any vector $\xvec{y} \in \mathbb{R}^m$.

\subsection{Stationary linear methods}

Any iterative method for solving \eqref{eq:lineq} that can be written in the form
\begin{equation} \label{eq:stationaryLinearMethod}
\itvec{k+1} = \xmat{M} \itvec{k} + \xmat{N}^{-1} \xvec{u}^n
\end{equation}
for some matrices $\xmat{M}$ and $\xmat{N}$, is termed a stationary linear method. Well known examples include the Richardson, Jacobi and Gauss-Seidel iterations.

\subsubsection{Richardson iteration}

For the Richardson iteration, $\xmat{M} = \xmat{I} - \theta (\xmat{I} - \alpha \xmat{A})$ and $\xmat{N}^{-1} = \theta \xmat{I}$, where $\theta \in \mathbb{R}$ is a fixed parameter. With some minor rearrangements, the iteration can thus be expressed as
$$
\itvec{k+1} = \itvec{k} + \theta (\xvec{b} - (\xmat{I} - \alpha \xmat{A}) \itvec{k}).
$$
Multiplying the $i$th element by $|\Omega_i|$ and summing over $i$ results in
$$
\sum_{i=1}^m|\Omega_i| \iter{k+1} = \sum_{i=1}^m |\Omega_i| \iter{k} + \theta \sum_{i=1}^m |\Omega_i| (b_i - \iter{k}) + \alpha \theta \sum_{i=1}^m |\Omega_i| (\xmat{A} \itvec{k})_i.
$$
By assumption, the matrix-vector product $\xmat{A} \itvec{k}$ has zero mass, hence the final term vanishes. Further, if $\itvec{k}$ has the same mass as $\xvec{b}$, then the second term on the right-hand side vanishes. By induction it follows that the Richardson iteration is globally conservative.

\subsubsection{Jacobi iteration}

Consider again the linear system \eqref{eq:lineq}. With the matrix decomposition $\xmat{I}-\alpha \xmat{A} = \xmat{I} - \alpha (\xmat{D} + \xmat{L} + \xmat{U})$, where $\xmat{D}$, $\xmat{L}$ and $\xmat{U}$ are the diagonal, lower and upper triangular parts of $\xmat{A}$ respectively, the Jacobi iteration is obtained by setting $\xmat{M} = \alpha (\xmat{I} - \alpha \xmat{D})^{-1} (\xmat{L} + \xmat{U})$ and $\xmat{N}^{-1} = (\xmat{I} - \alpha \xmat{D})^{-1}$. Inserting this into \eqref{eq:stationaryLinearMethod}, multiplying by $\xmat{I} - \alpha \xmat{D}$, then adding and subtracting $\alpha \xmat{D} \itvec{k}$, gives after some rearrangement
$$
\itvec{k+1} = \xvec{b} + \alpha \xmat{D} ( \itvec{k+1} - \itvec{k}) + \alpha \xmat{A} \itvec{k}.
$$
Multiplying the $i$th element by $|\Omega_i|$ and summing over $i$ results in
\begin{equation} \label{eq:Jacobi_mass}
\sum_{i=1}^m |\Omega_i| \iter{k+1} = \sum_{i=1}^m |\Omega_i| b_i + \alpha \sum_{i=1}^m |\Omega_i| a_{i,i} (\iter{k+1} - \iter{k}) + \alpha \sum_{i=1}^m |\Omega_i| (\xmat{A} \itvec{k})_i.
\end{equation}
The final term vanishes by the assumption. However, the second term on the right-hand side is in general non-zero and thus introduces a conservation error
$$
\alpha \sum_{i=1}^m |\Omega_i| a_{i,i} (\iter{k+1} - \iter{k}).
$$
The Jacobi method is therefore not globally conservative in general. 

However, consider the special case when the diagonal $\xmat{D}$ of $\xmat{A}$ is a scalar multiple of the identity matrix, say $\xmat{D} = a \xmat{I}$. Then $a_{i,i} = a$ for $i = 1, \dots, m$. If $\itvec{k}$ has the same mass as $\xvec{b}$ we find, after rearrangement of \eqref{eq:Jacobi_mass}, that
$$
(1 - \alpha a) \sum_{i=1}^m |\Omega_i| \iter{k+1} = (1 - \alpha a) \sum_{i=1}^m |\Omega_i| b_i.
$$
Thus, $\itvec{k+1}$ has correct mass, at least as long as $\alpha a \neq 1$. Hence, the Jacobi iteration is conservative in this special case.

\subsubsection{Gauss-Seidel iteration}

The Gauss-Seidel iteration is defined by $\xmat{M} = \alpha (\xmat{I} - \alpha (\xmat{D} + \xmat{L}))^{-1} \xmat{U}$ and $\xmat{N}^{-1} = (\xmat{I} - \alpha (\xmat{D} + \xmat{L}))^{-1}$. Inserting this into \eqref{eq:stationaryLinearMethod}, multiplying by $\xmat{I} - \alpha (\xmat{D} + \xmat{L})$, then adding and subtracting $\alpha \xmat{U} \itvec{k+1}$ gives
\begin{equation*}
\itvec{k+1} = \xvec{b} - \alpha \xmat{U} (\itvec{k+1} - \itvec{k}) + \alpha A \itvec{k+1}.
\end{equation*}
Following the same procedure as previously, we multiply the $i$th element by $|\Omega_i|$ and sum over $i$ to obtain
\begin{align*}
\sum_{i=1}^m |\Omega_i| \iter{k+1} &= \sum_{i=1}^m |\Omega_i| b_i + \alpha \sum_{i=1}^m |\Omega_i| \sum_{j=i+1}^m a_{i,j} (\iter[j]{k+1} - \iter[j]{k}) \\
&+ \alpha \sum_{i=1}^m |\Omega_i| (\xmat{A} \itvec{k+1})_i.
\end{align*}
By assumption, the final term vanishes. However, the second term on the right-hand side is in general non-zero. Thus, the Gauss-Seidel method is not globally conservative and the conservation error is given by
$$
\alpha \sum_{i=1}^m |\Omega_i| \sum_{j=i+1}^m a_{i,j} (\iter[j]{k+1} - \iter[j]{k}).
$$
However, note that for problems where $\xmat{A}$ is lower triangular the Gauss-Seidel iteration is indeed conservative. In fact, for such problems the method is by definition exact.

\subsection{Krylov subspace methods}

Given an initial guess $\itvec{0}$ for the solution of the linear system \eqref{eq:lineq}, the $k$th iteration of a Krylov subspace method belongs to the space
\begin{align*}
\mathcal{K}_k &= \itvec{0} + \text{span} \left\{ \xvec{r}^{(0)}, (\xmat{I} - \alpha \xmat{A}) \xvec{r}^{(0)}, \dots, (\xmat{I} - \alpha \xmat{A})^{k-1} \xvec{r}^{(0)} \right\} \\
&= \itvec{0} + \text{span} \left\{ \xvec{r}^{(0)}, \xmat{A} \xvec{r}^{(0)}, \dots, \xmat{A}^{k-1} \xvec{r}^{(0)} \right\},
\end{align*}
where $\xvec{r}^{(0)} = \xvec{b} - (\xmat{I} - \alpha \xvec{A}) \itvec{0}$ is the initial residual. Examples of Krylov subspace methods are GMRES, CG and BICGSTAB \cite[Chapters 6--7]{saad2003iterative}. 

We may express the $k$th iteration $\itvec{k}$ as
$$
\itvec{k} = \itvec{0} + \sum_{l=0}^{k-1} c_l \xmat{A}^l \xvec{r}^{(0)},
$$
for some coefficients $c_l, \, l=0,\dots,k-1$ that depend on, and are chosen by the method. It follows that
$$
\sum_{i=1}^m |\Omega_i| \iter{k} = \sum_{i=1}^m |\Omega_i| \iter{0} + c_0 \sum_{i=1}^m |\Omega_i| r_i^{(0)} + \sum_{l=1}^{k-1} c_l \sum_{i=1}^m |\Omega_i| (\xmat{A}^l \xvec{r}^{(0)})_i.
$$
The final term vanishes by assumption. Note that the mass of $\xvec{r}^{(0)}$ is zero if the mass of $\itvec{0}$ equals that of $\xvec{b}$. Thus, the second term on the right-hand side also vanishes. All Krylov subspace methods are therefore conservative if the initial guess is chosen to have the same mass as the right-hand side.

Typically a preconditioner $\xmat{P}^{-1}$ will pre- or post-multiply $\xvec{A}$ in Krylov subspace methods. We will not delve into this subject here but merely observe that if $\xvec{y}$ and $\xmat{P}^{-1} \xvec{y}$ have the same mass for any choice of $\xvec{y}$, then the above analysis applies without modification.

\subsection{Multigrid methods}

Multigrid methods combine two methods, namely a smoother and a coarse grid correction (CGC). The idea is to separate the residual into high and low frequency components. As a smoother, an iterative method is applied, designed to effectively damp the high frequency components.

The role of the CGC is to remove the low frequency components of the residual. The residual is mapped to a coarser grid using a restriction operator, $\xmat{R}$. Smoothing is then applied and a prolongation operator $\xmat{P}$ is used to reconstruct the residual on the fine grid. Finally, the fine grid solution is updated by adding the correction to the previous iterate. The procedure can be applied to a hirearchy of grids, thereby resulting in a multigrid method. If we consider only two grid levels and assume that the system is solved exactly on the coarser one, then the coarse grid correction for the linear problem \eqref{eq:lineq} has the form
\begin{equation} \label{eq:coarse_grid_correction}
\itvec{k+1} = (\xmat{I} - \xmat{P} (\xmat{I} - \alpha \xmat{A})_{\ell-1}^{-1} \xmat{R} (\xmat{I} - \alpha \xmat{A})_\ell) \itvec{k} + \xmat{P} (\xmat{I} - \alpha \xmat{A})_{\ell-1}^{-1} \xmat{R} \xvec{b}.
\end{equation}
Here, the notation $(\cdot)_\ell$ and $(\cdot)_{\ell-1}$ denote matrices operating on the fine and the coarse grid respectively.

The easiest way to make a multigrid method globally conservative is to choose its components to be globally conservative. The smoother can be any of the globally conservative iterations discussed so far. By Lemma \ref{lemma:conservativelinearsystem}, the inverted matrices in \eqref{eq:coarse_grid_correction} are mass conserving. Thus, it remains to choose globally conservative restriction and prolongation operators $\xmat{R}$ and $\xmat{P}$. This is achieved using agglomeration. The restriction is performed by agglomerating a number of neighboring cells by constructing a volume weighted average of fine grid values. Conversely, the prolongation is done by injecting coarse grid values at multiple fine grid points. In fact, this is the standard choice of multigrid method in CFD since it results in faster convergence than other alternatives. For details, see \cite{birken2012habilitation}.

\subsection{Pseudo-time iterations}

Pseudo-time iterations are obtained by adding a pseudo-time derivative term to the algebraic equations \eqref{eq:nonlineq} (or \eqref{eq:lineq}),
\begin{equation} \label{eq:pseudo_time_IVP}
\frac{\partial \xvec{u}}{\partial \tau} + \xvec{u} - \alpha \hat{\xmat{f}}(\xvec{u}) = \xvec{u}^n, \qquad \xvec{u}(0) = \xvec{u}_0.
\end{equation}
Here, $\xvec{u}_0$ is initial data that must be provided, e.g. $\xvec{u}_0 = \xvec{u}^n$. The idea is that the system eventually should reach a steady state as $\tau \rightarrow \infty$ where the pseudo-time derivative vanishes \cite{jameson1981numerical}, resuting in a solution to the original nonlinear system \eqref{eq:nonlineq}. Any time integration method can in principle be applied to the initial value problem \eqref{eq:pseudo_time_IVP}. Explicit RK methods are globally conservative by construction. Implicit methods result in new systems of equations whose solutions are once again approximated using iterative methods \cite{birken2019preconditioned}. Whether the resulting approximation is conservative depends on the choice of method. Any one of the conservative methods discussed so far can be applied in principle. We will return to pseudo-time iterations in Section \ref{sec:local_conservation}, where more details are provided.


\subsection{Numerical validation}

To validate the results of the preceding section we consider two simple experiments. The first is the linear advection equation with periodic boundary conditions,
\begin{equation} \label{eq:advection_equation}
\begin{aligned}
u_t + u_x &= 0, \quad x \in (-1.5, 1.5], \quad t \in (0,6], \\
u(x,0) &= \exp{(-50x^2)}, \\
u(-1.5,t) &= u(1.5,t).
\end{aligned}
\end{equation}
We discretize using the finite volume scheme \eqref{eq:FV} with a central numerical flux $\hat{f}_{i+\frac{1}{2}} = (u_{i+1}^{n+1} - u_i^{n+1})/2$. This results in a linear system of the form \eqref{eq:lineq} with $\alpha = -\dt / \dx$ and $\xmat{A} = \text{Tridiag} \left( -\frac{1}{2}, 0, \frac{1}{2} \right)$. Here, $\text{Tridiag}(\cdot)$ refers to a tridiagonal matrix with periodic wrap-around. Note that the diagonal of $\xmat{A}$ is constant. We therefore expect the Jacobi method to be conservative for this particular discretization.

The second experiment is Burgers' equation with periodic boundary conditions,
\begin{equation} \label{eq:Burgers_equation}
\begin{aligned}
u_t + \left( \frac{u^2}{2} \right)_x &= 0, \quad x \in (-1.5, 1.5], \quad t \in (0,0.6], \\
u(x,0) &= \exp{(-x^2)}, \\
u(-1.5,t) &= u(1.5,t).
\end{aligned}
\end{equation}
Again, we use the finite volume scheme \eqref{eq:FV}, however with the upwind flux $\hat{f}_{i+\frac{1}{2}} = \left( u_i^{n+1} \right)^2/2$. The result is a nonlinear system in the form \eqref{eq:nonlineq} with $\alpha = -\dt$ and $\hat{f}_i(\xvec{u}^{n+1}) = \left( (u_i^{n+1})^2 - (u_{i-1}^{n+1})^2 \right)/2\dx$. The Jacobian of this discretization is lower triangular except for a single element in the top right corner. Thus, we expect the Gauss-Seidel method to be nearly conservative if the grid is sufficiently fine.

Burgers' equation \eqref{eq:Burgers_equation} is solved using Newton's method with initial guess $\xvec{u}^n$. The resulting linear systems are either solved exactly or using the following iterative methods with $\xvec{0}$ as initial guess:
\begin{description}[leftmargin=!,labelwidth=1cm]
\item[(R)] The Richardson iteration using $\theta = 0.5$.
\item[(J)] The Jacobi method.
\item[(GS)] The Gauss-Seidel method.
\item[(GM)] GMRES without restarts or preconditioning.
\item[(CGC)] A two-level coarse grid correction.
\item[(H)] Pseudo-time iterations using Heun's method with $\dtau = 0.5$.
\end{description}
As prolongation and restriction operators for the CGC, agglomeration gives
$$
\xmat{R} = \frac{1}{2}
\begin{bmatrix}
1 & 1 & & & & \\
& & 1 & 1 & & \\
& & & \ddots & &
\end{bmatrix},
\quad \xmat{P} = 2 \xmat{R}^\top.
$$
The same methods are used directly on the advection problem \eqref{eq:advection_equation} with $\xvec{u}^n$ as initial guess.

First we consider very coarse discretizations with $\dx = \dt = 0.5$ for both problems. A single iteration is used with each method and the mass error and residual is computed after one time step. The results are shown in Table \ref{tab:mass_error}. Here, any error smaller than $10^{-15}$ is denoted as zero. The Richardson iteration, GMRES, CGC and the pseudo-time iterations are conservative for both problems. Similarly, the exact Newton method (Exact) is conservative. As expected, the Jacobi method is conservative for the  advection problem but not for Burgers' equation.  Gauss-Seidel is not conservative for any of the two problems.

\begin{table}[h!]
\begin{center}
\caption{Mass errors and residuals after one time step with $\dt = \dx = 0.5$ for the advection and Burgers' equations using a single iteration. Computed values smaller than $10^{-15}$ are denoted by $0$.}
\begin{tabular}{ |l|c|c|c|c|c|c|c| } 
 \hline 
{\bf Advection equation} & (Exact) & (R) & (J) & (GS) & (GM) & (CGC) & (H) \\
 \hline
 Mass error & & $0$ & $0$ & $-0.094$ & $0$ & $0$ & $0$ \\
 \hline
 Residual & & $0.331$ & $0.433$ & $0.256$ & $0.327$ & $0.162$ & $0.287$ \\
 \hline
 \multicolumn{4}{c}{\vspace{0.1cm}}\\
 \hline
 {\bf Burgers' equation} & (Exact) & (R) & (J) & (GS) & (GM) & (CGC) & (H) \\
 \hline
 Mass error & $0$ & $0$ & $0.031$ & $-0.034$ & $0$ & $0$ & $0$ \\
 \hline
 Residual & $0.897$ & $0.895$ & $0.901$ & $0.862$ & $0.898$ & $0.924$ & $0.878$ \\
 \hline
\end{tabular}
\label{tab:mass_error}
\end{center}
\end{table}

Next, we repeat the experiment with well resolved discretizations. For the advection problem we choose $\dt = \dx = 0.006$ and for Burgers' equation $\dt = \dx = 0.003$. In the latter case, two Newton iterations are used per time step. For each  of the other methods, 5 iterations are used except for CGC where a single iteration is performed.

\begin{figure}[t!]
    \centering
    \begin{subfigure}{0.49\textwidth}
    \includegraphics[width=\textwidth]{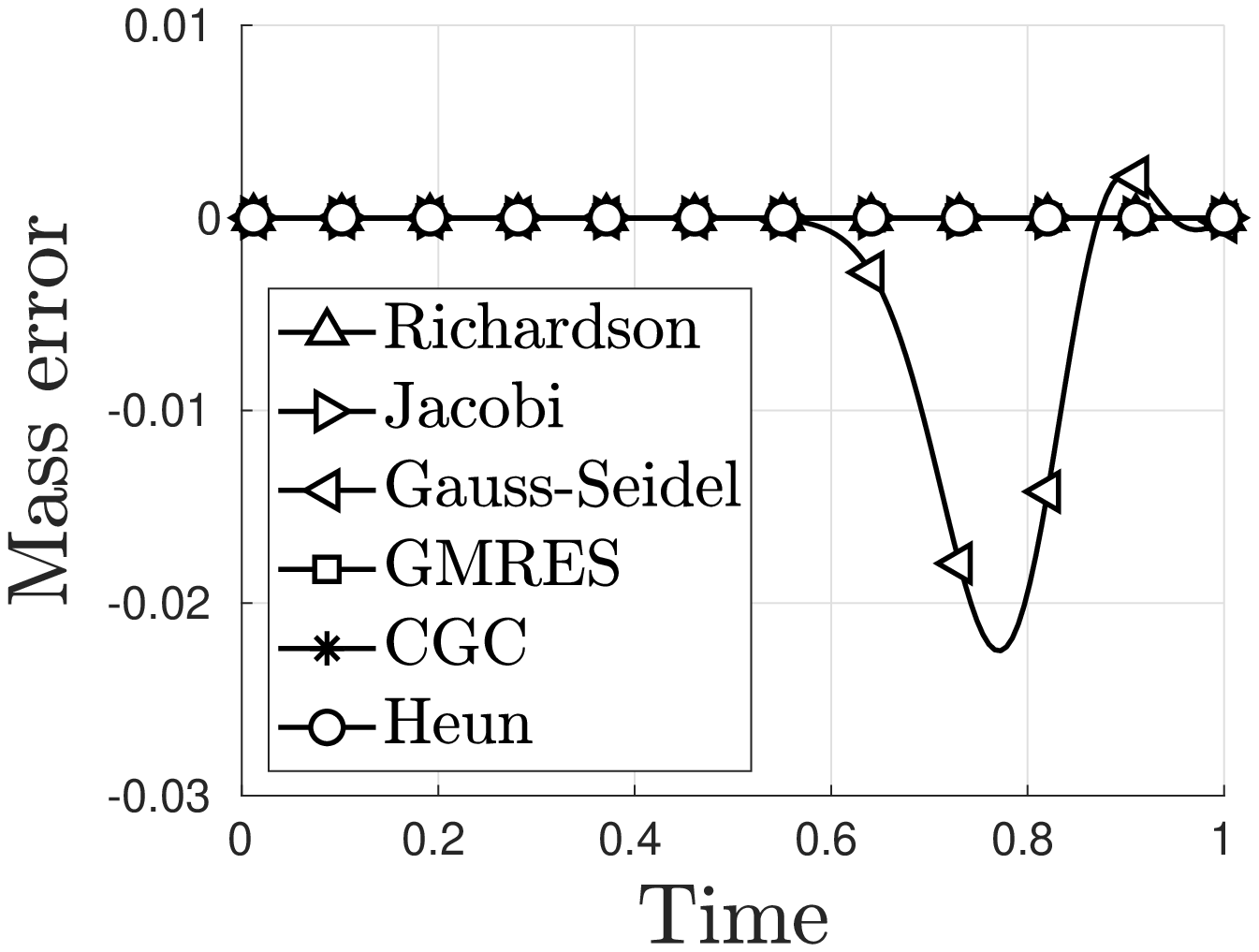}
    \caption{Linear advection.}
    \label{fig:mass_conservation_advection}
    \end{subfigure}
    \begin{subfigure}{0.49\textwidth}
    \includegraphics[width=\textwidth]{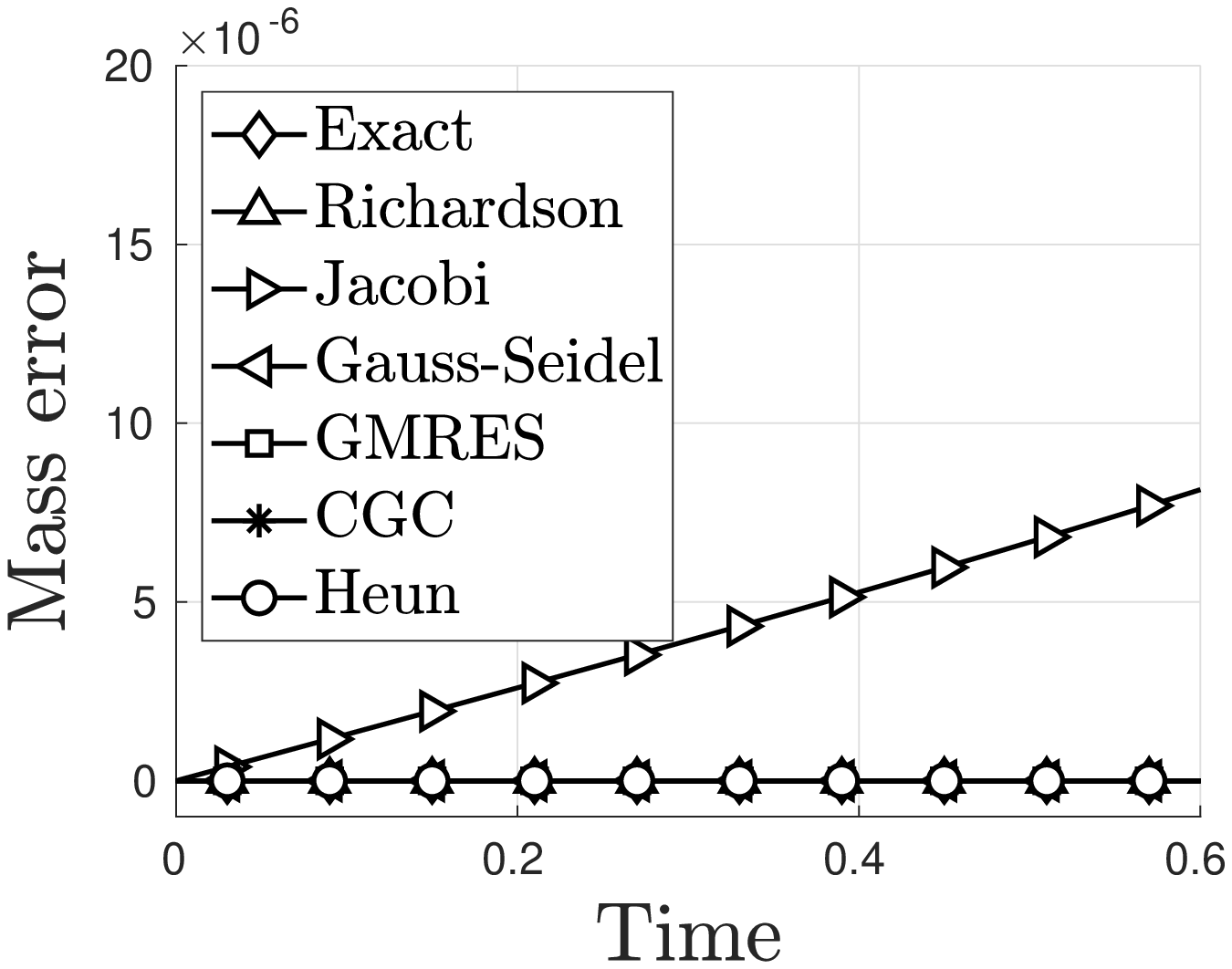}
    \caption{Burgers' equation.}
    \label{fig:mass_conservation_Burgers}
    \end{subfigure}
    \caption{Mass error for iterative methods applied to the advection equation \eqref{eq:advection_equation} and Burgers' equation \eqref{eq:Burgers_equation}.}
    \label{fig:mass_conservation}
\end{figure}

The total mass is computed in each time step and compared with the mass of the initial data. Fig. \ref{fig:mass_conservation} shows the results. The main difference to the previous test is that Gauss-Seidel has a non-detectible mass error for Burger's equation. This is in line with our expectations since the Jacobian only has a single nonzero element above the main diagonal.


\section{Local conservation of pseudo-time iterations} \label{sec:local_conservation}

Having covered global conservation in the previous section, we now switch focus to local conservation. The Jacobi and Gauss-Seidel methods cannot be locally conservative since they are not globally conservative in general. For the other methods considered so far, the question remains open.

In what follows, we restrict our attention to the Cauchy problem for \eqref{eq:conservation_law_1D}. We focus on pseudo-time iterations, show that these methods are locally conservative and prove an extension of the Lax-Wendroff theorem. Noting that the Richardson iteration is equivalent to pseudo-time iterations with explicit Euler, this method is also covered.


\subsection{Pseudo-time iterations in locally conservative form} \label{sec:conservative_form}

We once again consider the finite volume method \eqref{eq:FV} where the implicit Euler method is used as time discretization. In order to apply pseudo-time iterations to this scheme, we introduce a pseudo-time derivative,
\begin{equation*}
	\frac{\partial u_i}{\partial \tau} + g_i(\xvec{u}) = 0, \qquad u_i(0) = u_{0_i}, \qquad i = \dots, -1, 0, 1, \dots
\end{equation*}
where the nonlinear function $g_i$ is given by
\begin{equation} \label{eq:g-fun}
    g_i(\xvec{u}) = \frac{u_i - u_i^n}{\dt} + \frac{1}{\dx} \left( \hat{f}_{i+\frac{1}{2}}(\xvec{u}) - \hat{f}_{i-\frac{1}{2}}(\xvec{u}) \right).
\end{equation}
Several different methods are available for iterating in pseudo-time \cite{swanson2007convergence, birken2019preconditioned}. Herein, we use an explicit $s$-stage Runge-Kutta method (ERK). Let $(\tmat{A},\tvec{b},\tvec{c})$ denote the coefficient matrix and vectors of the ERK method. We denote the $k$th pseudo-time iterate by $\iter{k}$. The subsequent iterate $\iter{k+1}$ is computed from $\iter{k}$ as
\begin{equation} \label{eq:RK_step}
    \iter{k+1} = \iter{k} - \dtau_k \sum_{j=1}^s b_j g_i \left( \xvec{U}_j^{(k)} \right),
\end{equation}
where the stage vectors $\xvec{U}_j^{(k)}$, $j = 1, \dots, s$ have elements
\begin{equation} \label{eq:RK_stages}
U_{j_\iota}^{(k)} = \iter[\iota]{k} - \dtau_k \sum_{l=1}^{j-1} a_{j,l} g_\iota \left( \xvec{U}_l^{(k)} \right), \quad \iota = i-p, \dots, i+q.
\end{equation}
As previously, $p$ and $q$ determine the bandwidth of the finite volume stencil.

In the remainder we always use a fixed number $N$ of iterations. A step in physical time is taken by setting $u_i^{n+1} = \iter{N}$. Throughout, $\iter{0} = u_i^n$ is chosen as initial iterate.

Recall that the \emph{stability function} $\phi(z)$ of an RK method $(\tmat{A},\tvec{b},\tvec{c})$ is given by (see e.g. \cite[Chapter IV.3]{wanner1996solving})
\begin{equation} \label{eq:stability_function}
    \phi(z) = 1 + z \tvec{b}^\top (\tmat{I} - z \tmat{A})^{-1} \tvec{1},
\end{equation}
where $\tmat{I}$ is the $s \times s$ identity matrix. The \emph{stability region} of the RK method is defined as the subset of the complex plane for which $|\phi(z)| < 1$. The proof of the following lemma is rather lengthy and is therefore deferred to Appendix \ref{appendix:proof}.

\begin{lemma} \label{lemma:flux_form}
For each pseudo-time iteration $k = 0, \dots, N-1$, let $\mu_k = \dtau_k/\dt$ and set $\iter{0} = u_i^n$. Then, for any $N \geq 1$, the elements of the pseudo-time iterate $\iter{N}$ satisfy the relation
\begin{equation} \label{eq:pseudo-time_conservative}
    \frac{\iter{N} - u_i^n}{\dt} + \frac{1}{\dx} \left( \hat{h}_{i+\frac{1}{2}}^{(N)} - \hat{h}_{i-\frac{1}{2}}^{(N)} \right) = 0, \qquad i = \dots, -1, 0, 1, \dots
\end{equation}
where the flux $\hat{h}_{i+\frac{1}{2}}^{(N)}$ is given by
\begin{equation} \label{eq:H-flux}
    \hat{h}_{i+\frac{1}{2}}^{(N)} = \sum_{k=0}^{N-1} \mu_k \tvec{b}^\top (\tmat{I} + \mu_k \tmat{A})^{-1} \left( \prod_{l=k+1}^{N-1} \phi(-\mu_{l}) \right) \hat{\tvec{f}}_{i+\frac{1}{2}}^{(k)}
\end{equation}
and $\hat{\tvec{f}}_{i+\frac{1}{2}}^{(k)} = \left( \hat{f}_{i+\frac{1}{2}}\left( \xvec{U}_1^{(k)} \right), \dots, \hat{f}_{i+\frac{1}{2}} \left( \xvec{U}_s^{(k)} \right) \right)^\top$.
\end{lemma}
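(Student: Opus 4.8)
The plan is to expose the conservative structure one pseudo-time step at a time and then chain the $N$ steps together. I would first shift everything by $u_i^n$, introducing the residuals $v_i^{(k)} = \iter{k} - u_i^n$ and the stage residuals $w_{j_i}^{(k)} = U_{j_i}^{(k)} - u_i^n$. The reason for this shift is that $g_\iota(\xvec{U}_l^{(k)})$ then splits cleanly into a local term $w_{l_\iota}^{(k)}/\dt$ and a flux difference $(\hat{f}_{\iota+\frac12} - \hat{f}_{\iota-\frac12})/\dx$, while the prescribed initial guess $\iter{0} = u_i^n$ supplies the clean starting value $v_i^{(0)} = 0$, which is needed at the very end.

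\textbf{Setup over the stage index.} Fix a cell $i$ and collect the stage residuals into $\tvec{w}_i^{(k)} = (w_{1_i}^{(k)}, \dots, w_{s_i}^{(k)})^\top \in \mathbb{R}^s$. Because the method is explicit, $\tmat{A}$ is strictly lower triangular, so inserting the split of $g_i$ into \eqref{eq:RK_stages} yields a closed linear system in the stage index,
$$(\tmat{I} + \mu_k \tmat{A}) \tvec{w}_i^{(k)} = v_i^{(k)} \tvec{1} - \frac{\dtau_k}{\dx} \tmat{A} \left( \hat{\tvec{f}}_{i+\frac12}^{(k)} - \hat{\tvec{f}}_{i-\frac12}^{(k)} \right).$$
Here the evaluated fluxes are treated as given data; all coupling between cells lives inside these flux vectors, which is precisely what the conservative form should isolate. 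Since $\tmat{I} + \mu_k \tmat{A}$ is lower triangular with unit diagonal it is invertible for every $\mu_k$, so $\tvec{w}_i^{(k)}$ is uniquely determined.

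\textbf{Reduction to a scalar recurrence.} Substituting the solved $\tvec{w}_i^{(k)}$ into the shifted update \eqref{eq:RK_step} should collapse the problem to
$$v_i^{(k+1)} = \phi(-\mu_k) v_i^{(k)} - \frac{\dt}{\dx} \mu_k \tvec{b}^\top (\tmat{I} + \mu_k \tmat{A})^{-1} \left( \hat{\tvec{f}}_{i+\frac12}^{(k)} - \hat{\tvec{f}}_{i-\frac12}^{(k)} \right).$$
Two algebraic facts drive this and form the technical heart of the argument. First, the coefficient multiplying $v_i^{(k)}$ is $1 - \mu_k \tvec{b}^\top (\tmat{I} + \mu_k \tmat{A})^{-1} \tvec{1}$, which I recognize as the stability function \eqref{eq:stability_function} at $z = -\mu_k$. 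Second, the two flux contributions (one from the stages, one from the $\tvec{b}$-weighted update) combine via the identity $\mu_k (\tmat{I} + \mu_k \tmat{A})^{-1} \tmat{A} = \tmat{I} - (\tmat{I} + \mu_k \tmat{A})^{-1}$, so the flux coefficient collapses to the single term $-\tvec{b}^\top (\tmat{I} + \mu_k \tmat{A})^{-1}$. I expect this collapse to be the main obstacle: no single step is deep, but it is easy to mishandle signs and the interplay of $\dtau_k$, $\dt$, $\dx$ and $\mu_k$, and making the stability function appear cleanly is where the bookkeeping must be done with care.

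\textbf{Closing the argument.} The scalar recurrence is linear and inhomogeneous with $v_i^{(0)} = 0$. Solving it by induction on $N$, the homogeneous factors accumulate into the product $\prod_{l=k+1}^{N-1} \phi(-\mu_l)$ (empty product equal to one), giving
$$\frac{v_i^{(N)}}{\dt} = -\frac{1}{\dx} \sum_{k=0}^{N-1} \mu_k \tvec{b}^\top (\tmat{I} + \mu_k \tmat{A})^{-1} \left( \prod_{l=k+1}^{N-1} \phi(-\mu_l) \right) \left( \hat{\tvec{f}}_{i+\frac12}^{(k)} - \hat{\tvec{f}}_{i-\frac12}^{(k)} \right).$$
Every scalar coefficient in this sum is independent of the cell index $i$, so the difference $\hat{\tvec{f}}_{i+\frac12}^{(k)} - \hat{\tvec{f}}_{i-\frac12}^{(k)}$ distributes and the right-hand side equals $-(\hat{h}_{i+\frac12}^{(N)} - \hat{h}_{i-\frac12}^{(N)})/\dx$ with $\hat{h}_{i+\frac12}^{(N)}$ defined exactly as in \eqref{eq:H-flux}. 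Substituting $v_i^{(N)} = \iter{N} - u_i^n$ then produces \eqref{eq:pseudo-time_conservative}, completing the proof.
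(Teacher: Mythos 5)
Your proposal is correct and follows essentially the same route as the paper's proof in Appendix~\ref{appendix:proof}: both solve the stage system via $(\tmat{I} + \mu_k \tmat{A})^{-1}$, use the resolvent identity $\tmat{I} - \mu_k(\tmat{I}+\mu_k\tmat{A})^{-1}\tmat{A} = (\tmat{I}+\mu_k\tmat{A})^{-1}$ to collapse the flux coefficient, and recognize the stability function \eqref{eq:stability_function} in the coefficient of the previous iterate. The only difference is organizational: by shifting to the residuals $v_i^{(k)} = \iter{k} - u_i^n$ and isolating the one-step recurrence $v_i^{(k+1)} = \phi(-\mu_k)\, v_i^{(k)} - \frac{\dtau_k}{\dx}\tvec{b}^\top(\tmat{I}+\mu_k\tmat{A})^{-1}\bigl(\hat{\tvec{f}}_{i+\frac12}^{(k)} - \hat{\tvec{f}}_{i-\frac12}^{(k)}\bigr)$ before summing, you fold the separate base case $N=1$ (which the paper treats explicitly) into the trivial initial condition $v_i^{(0)}=0$.
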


\begin{remark}
The product in \eqref{eq:H-flux} is empty when $k = N-1$. To handle this case we use the convention
$$
\prod_{l=N}^{N-1} \phi(-\mu_{l}) = 1.
$$
\end{remark}

It follows immediately from Lemma \ref{lemma:flux_form} that the locally conservative nature of the discretization \eqref{eq:FV} is preserved by the pseudo-time iterations, albeit with the new numerical flux $\hat{h}_{i+\frac{1}{2}}^{(N)}$ in place of $\hat{f}_{i+\frac{1}{2}}$. The question with which flux $\hat{h}_{i+\frac{1}{2}}^{(N)}$ is consistent, is answered in the following Theorem:

\begin{theorem} \label{thm:local_conservation}
Let $N$ be given and let $\mu_k = \dtau_k/\dt$ for $k = 0, \dots, N-1$. Set $\iter{0} = u_i^n$ and terminate the pseudo-time iteration after $N$ steps. Then the resulting scheme can be written in the locally conservative form
\begin{equation} \label{eq:conservative_form}
    \frac{u_i^{n+1} - u_i^n}{\dt} + \frac{1}{\dx} \left( \hat{h}_{i+\frac{1}{2}}^{(N)} - \hat{h}_{i-\frac{1}{2}}^{(N)} \right) = 0, \qquad i = \dots, -1, 0, 1, \dots
\end{equation}
where $\hat{h}_{i+\frac{1}{2}}^{(N)}$, given by \eqref{eq:H-flux}, is a numerical flux consistent with the flux $c(\mu_0,\dots,\mu_{N-1}) f$, with
\begin{equation} \label{eq:wave_speed}
    c(\mu_0,\dots,\mu_{N-1}) = 1 - \prod_{l=0}^{N-1} \phi(-\mu_{l}).
\end{equation}
\end{theorem}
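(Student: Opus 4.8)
The locally conservative form \eqref{eq:conservative_form} is immediate: since $u_i^{n+1} = \iter{N}$ by construction, it is exactly the relation \eqref{eq:pseudo-time_conservative} established in Lemma \ref{lemma:flux_form}, with the flux $\hat{h}_{i+\frac{1}{2}}^{(N)}$ given by \eqref{eq:H-flux}. The substance of the theorem therefore lies in the consistency claim, and by Definition \ref{def:consistency} this splits into two parts: Lipschitz continuity of $\hat{h}_{i+\frac{1}{2}}^{(N)}$ in each argument, and the diagonal identity $\hat{h}_{i+\frac{1}{2}}^{(N)}(u,\dots,u) = c(\mu_0,\dots,\mu_{N-1}) f(u)$.

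For Lipschitz continuity I would argue structurally. The scalar weights $\mu_k \tvec{b}^\top (\tmat{I} + \mu_k \tmat{A})^{-1}$, together with the products of stability values, are fixed constants determined by the method and the chosen $\mu_k$. Hence $\hat{h}_{i+\frac{1}{2}}^{(N)}$ is a finite linear combination, with constant coefficients, of the quantities $\hat{f}_{i+\frac{1}{2}}(\xvec{U}_j^{(k)})$. Each stage vector $\xvec{U}_j^{(k)}$ is built from the initial data by finitely many additions, scalings and evaluations of $g_\iota$, which in turn involves only the Lipschitz flux $\hat{f}$; a finite composition of Lipschitz maps on bounded sets is again Lipschitz. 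Consequently each $\hat{f}_{i+\frac{1}{2}}(\xvec{U}_j^{(k)})$, and therefore $\hat{h}_{i+\frac{1}{2}}^{(N)}$, is Lipschitz in every cell value entering its (finite) stencil.

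For the diagonal identity I would evaluate at a constant state. Setting every argument equal to $u$, and recalling $\iter[\iota]{0} = u_\iota^n = u$, the definition \eqref{eq:g-fun} gives $g_\iota = 0$ at the constant state, since $u_\iota = u_\iota^n$ and, by consistency of $\hat{f}$, the flux difference vanishes. By induction over the stages \eqref{eq:RK_stages} and the iterations \eqref{eq:RK_step}, every stage vector collapses to the constant $u$, so that $\hat{f}_{i+\frac{1}{2}}(\xvec{U}_j^{(k)}) = \hat{f}(u,\dots,u) = f(u)$ and hence $\hat{\tvec{f}}_{i+\frac{1}{2}}^{(k)} = f(u)\tvec{1}$. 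Substituting into \eqref{eq:H-flux} reduces $\hat{h}_{i+\frac{1}{2}}^{(N)}(u,\dots,u)$ to $f(u)$ times the scalar sum $\sum_{k=0}^{N-1} \mu_k \tvec{b}^\top (\tmat{I}+\mu_k\tmat{A})^{-1}\tvec{1}\,\prod_{l=k+1}^{N-1}\phi(-\mu_l)$. The key manipulation is to read the definition \eqref{eq:stability_function} of the stability function at $z = -\mu_k$, giving $\mu_k \tvec{b}^\top (\tmat{I}+\mu_k\tmat{A})^{-1}\tvec{1} = 1 - \phi(-\mu_k)$.

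Writing $\phi_l = \phi(-\mu_l)$, the sum becomes $\sum_{k=0}^{N-1}(1-\phi_k)\prod_{l=k+1}^{N-1}\phi_l$, and setting $P_k = \prod_{l=k}^{N-1}\phi_l$ with the convention $P_N = 1$, each summand equals $P_{k+1} - \phi_k P_{k+1} = P_{k+1} - P_k$. The sum telescopes to $P_N - P_0 = 1 - \prod_{l=0}^{N-1}\phi(-\mu_l) = c(\mu_0,\dots,\mu_{N-1})$, which closes the identity. I expect the main obstacle to be organizational rather than conceptual: recognizing the telescoping structure concealed in \eqref{eq:H-flux} once the stability-function identity has been applied. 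The constant-state collapse of the stages deserves careful verification, but it follows directly from consistency of $\hat{f}$ and is precisely the mechanism through which the factors $\phi(-\mu_k)$ enter the modification constant $c$.
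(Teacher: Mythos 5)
Your proposal is correct and follows essentially the same route as the paper's proof: the conservative form is read off from Lemma \ref{lemma:flux_form}, the stages collapse to the constant state by consistency of $\hat{f}$ (the paper notes explicitly that this induction works because $\tmat{A}$ is lower triangular for an ERK method), the stability-function identity $\mu_k \tvec{b}^\top(\tmat{I}+\mu_k\tmat{A})^{-1}\tvec{1} = 1-\phi(-\mu_k)$ is applied, and the resulting sum telescopes to $c(\mu_0,\dots,\mu_{N-1})$. Your treatment of Lipschitz continuity is in fact slightly more explicit than the paper's one-line remark that $\hat{h}_{i+\frac{1}{2}}^{(N)}$ is a linear combination of evaluations of $\hat{f}_{i+\frac{1}{2}}$.
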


\begin{proof}
Firstly, the locally conservative form \eqref{eq:conservative_form} follows from setting $u_i^{n+1} = \iter{N}$ in \eqref{eq:pseudo-time_conservative}. Secondly we recall that the numerical flux $\hat{f}_{i+\frac{1}{2}}$ depends on $p+q+1$ parameters, e.g. $\hat{f}_{i+\frac{1}{2}}(\xvec{w}) = \hat{f}(w_{i-p}, \dots, w_{i+q})$. Inspecting the flux $\hat{h}_{i+\frac{1}{2}}^{(N)}$ we see that it is additionally dependent on $u_i^n$ so that we may write $\hat{h}_{i+\frac{1}{2}}^{(N)}(\xvec{w}) = \hat{h}^N(w_{i-p}, \dots, w_{i+q}; u_i^n)$. To establish the consistency we must therefore show that $\hat{h}^N(u, \dots, u; u) = c f(u)$. To do this, we first note from \eqref{eq:g-fun} that $g_i(u,\dots,u; u) = 0$ due to the consistency of $\hat{f}_{i+\frac{1}{2}}$. Using the fact that $\tmat{A}$ is lower triangular for any ERK method, it follows from \eqref{eq:RK_step} and \eqref{eq:RK_stages} that $U_{j_\iota}^{(k)} = \iter{k} = u$ for every $j$, $k$ and $\iota = i-p, \dots, i+q$. Consequently, the vector $\hat{\tvec{f}}_{i+\frac{1}{2}}^{(k)}$ becomes
$$
\hat{\tvec{f}}^k(u,\dots,u;u) = \left( \hat{f}(u,\dots,u), \dots, \hat{f}(u,\dots,u) \right) = \hat{f}(u,\dots,u) \tvec{1} = f(u) \tvec{1},
$$
where the consistency of $\hat{f}$ has been used in the final equality. Inserting this into the flux function $\hat{h}_{i+\frac{1}{2}}^{(N)}$ in \eqref{eq:H-flux} and using \eqref{eq:stability_function} gives
\begin{align*}
    \hat{h}_{i+\frac{1}{2}}^{(N)} &= \left[ \sum_{k=0}^{N-1} \mu_k \tvec{b}^\top (\tmat{I} + \mu_k \tmat{A})^{-1} \tvec{1} \prod_{l=k+1}^{N-1} \phi(-\mu_{l}) \right] f(u) \\
    &= \left[ \sum_{k=0}^{N-1} [1 - \phi(-\mu_k)] \prod_{l=k+1}^{N-1} \phi(-\mu_{l}) \right] f(u) \\
    &= \sum_{k=0}^{N-1} \left[ \prod_{l=k+1}^{N-1} \phi(-\mu_{l}) - \prod_{l=k}^{N-1} \phi(-\mu_{l}) \right] f(u) \\
    &= \left[ 1 - \prod_{l=0}^{N-1} \phi(-\mu_{l}) \right] f(u) \\
    &= c(\mu_0,\dots,\mu_{N-1}) f(u),
\end{align*}
where we have utilized the fact that the sum in the third equality is telescoping. Finally, note that $\hat{h}_{i+\frac{1}{2}}^{(N)}$ is formed by a linear combination of evaluations of $\hat{f}_{i+\frac{1}{2}}$ and is therefore Lipschitz continuous.
\end{proof}

The fact that $\hat{h}_{i+\frac{1}{2}}^{(N)}$ is inconsistent with $f$ when $c \neq 1$ is a manifestation of the error associated with the pseudo-time iterations. This has important implications on the convergence of the resulting scheme. What follows is an extension of the Lax-Wendroff theorem (see \cite[Chapter 12]{leveque1992numerical}) pertaining to ERK pseudo-time iterations:

\begin{theorem} \label{thm:LW}
Consider a sequence of grids $(\Delta x_\ell, \Delta t_\ell)$ such that $\Delta x_\ell, \Delta t_\ell \rightarrow 0$ as $\ell \rightarrow \infty$. Fix $N$ independently of $\ell$, set $\iter{0} = u_i^n$ and terminate the pseudo-time iterations after $N$ steps. Let $\Delta \tau_{k,\ell} / \Delta t_\ell = \mu_{k,\ell} = \mu_k$ be constants independent of $\ell$ for each $k=0,\dots,N-1$. Suppose that the numerical flux $\hat{f}$ in \eqref{eq:FV} is consistent with $f$ and that Assumption \ref{assumptions} is satisfied. Then, $u(x,t)$ is a weak solution of the conservation law
\begin{equation} \label{eq:pseudo_conservation_law}
    u_t + c(\mu_0,\dots,\mu_{N-1}) f_x = 0, \qquad c(\mu_0,\dots,\mu_{N-1}) = 1 - \prod_{l=0}^{N-1} \phi(-\mu_{l}).
\end{equation}
\end{theorem}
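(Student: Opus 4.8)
The plan is to reduce the statement to the classical Lax-Wendroff theorem (Theorem \ref{thm:Lax_Wendroff}), since Theorem \ref{thm:local_conservation} has already done the essential work: it recasts the terminated $N$-step pseudo-time iteration as the single locally conservative update \eqref{eq:conservative_form}, whose numerical flux $\hat{h}_{i+\frac{1}{2}}^{(N)}$ is consistent with the scaled flux $c\,f$, with $c = c(\mu_0,\dots,\mu_{N-1})$ given by \eqref{eq:wave_speed}. The first point to record is that the hypothesis $\mu_{k,\ell} = \mu_k$ renders $c$ a single constant independent of the grid index $\ell$; equivalently, the weights $\mu_k \tvec{b}^\top (\tmat{I} + \mu_k \tmat{A})^{-1} \prod_{l=k+1}^{N-1} \phi(-\mu_l)$ in \eqref{eq:H-flux} are the same on every grid in the sequence.

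With this in place I would introduce the auxiliary flux $\tilde{f} := c\,f$ and regard $\hat{h}_{i+\frac{1}{2}}^{(N)}$ as a numerical flux for it. By construction the scheme is exactly of the finite volume form \eqref{eq:FV} with $\hat{h}_{i+\frac{1}{2}}^{(N)}$ in place of $\hat{f}_{i+\frac{1}{2}}$, and Theorem \ref{thm:local_conservation} certifies consistency of $\hat{h}_{i+\frac{1}{2}}^{(N)}$ with $\tilde{f}$ in the sense of Definition \ref{def:consistency}. Since Assumption \ref{assumptions} is imposed on the very reconstructions $\mathcal{U}_\ell$ produced by this scheme, every hypothesis of Theorem \ref{thm:Lax_Wendroff} holds for the pair $(\hat{h}_{i+\frac{1}{2}}^{(N)}, \tilde{f})$, and I would invoke it to conclude that $u$ is a weak solution of $u_t + \tilde{f}_x = 0$. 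As $c$ is constant, $\tilde{f}_x = c f_x$, which is exactly \eqref{eq:pseudo_conservation_law}. Should one prefer not to use the classical result as a black box, the same conclusion follows by repeating its proof on \eqref{eq:conservative_form}: multiply by $\dx\,\dt\,\phi_i^n$ with $\phi \in C_0^\infty$, sum over $i$ and $n$, shift the discrete differences onto $\phi$ by summation by parts, and pass to the limit, using the $L^1$ convergence of Assumption \ref{assumptions} for the term carrying $\phi_t$ and the consistency of $\hat{h}_{i+\frac{1}{2}}^{(N)}$ for the term carrying $\phi_x$.

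The step I expect to be the crux is controlling the flux term in that limit, i.e. justifying the replacement of $\hat{h}_{i+\frac{1}{2}}^{(N)}$ by $c f(u_i^n)$. Combining the consistency identity $\hat{h}^{N}(u,\dots,u;u) = c f(u)$ with Lipschitz continuity gives a bound of the form
$$
\left| \hat{h}_{i+\frac{1}{2}}^{(N)} - c f(u_i^n) \right| \le L \sum_{j} \left| u_{i+j}^n - u_i^n \right|,
$$
so that after summation the aggregate discrepancy is dominated by $\dx$ times the total variation and hence vanishes by part~2 of Assumption \ref{assumptions}. The delicate issue is that the Lipschitz constant $L$ must be uniform in $\ell$: this is inherited from the fact that the weights in \eqref{eq:H-flux} are grid-independent once the $\mu_k$ are fixed, together with a uniform Lipschitz bound on the stage map \eqref{eq:RK_stages} with respect to the data, which in turn rests on the mesh ratio $\dt/\dx$ remaining bounded along the sequence. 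Establishing this uniform bound, and verifying that the initial-data contribution reproduces $u_0$ in the limit, is what remains to finish the argument.
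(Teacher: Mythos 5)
Your overall strategy coincides with the paper's: both arguments run the Lax--Wendroff machinery on the locally conservative form \eqref{eq:conservative_form} supplied by Theorem \ref{thm:local_conservation}, and the only genuinely new work is the flux term. The difference is that your primary route invokes Theorem \ref{thm:Lax_Wendroff} as a black box for the pair $\bigl(\hat{h}_{i+\frac{1}{2}}^{(N)}, cf\bigr)$, whereas the paper repeats the proof. This is not merely stylistic. The flux $\hat{h}_{i+\frac{1}{2}}^{(N)}$ is built from the stage values \eqref{eq:RK_stages}, and $\dtau_k\, g_\iota$ contains the term $\mu_k(\dt_\ell/\dx_\ell)\bigl(\hat{f}_{\iota+\frac{1}{2}}-\hat{f}_{\iota-\frac{1}{2}}\bigr)$; hence $\hat{h}^{(N)}$, viewed as a function of the $u^n$-stencil, is a \emph{different} function on each grid unless the mesh ratio $\dt_\ell/\dx_\ell$ is held fixed, so Theorem \ref{thm:Lax_Wendroff} (one fixed consistent flux for the entire sequence) does not apply verbatim. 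You correctly diagnose exactly this point --- the uniformity in $\ell$ of the Lipschitz constant and the role of the mesh ratio --- but you leave it as ``what remains to finish the argument,'' and that is precisely the crux that distinguishes this theorem from the classical one.

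The paper closes this gap by adding and subtracting $f(\mathcal{U}_\ell)\tvec{1}$ inside the expanded flux sum \eqref{eq:H-flux}: the coefficient multiplying $f(\mathcal{U}_\ell)$ telescopes to the grid-independent constant $c(\mu_0,\dots,\mu_{N-1})$ exactly as in the proof of Theorem \ref{thm:local_conservation}, while the remainder $\hat{\tvec{f}}_{i+\frac{1}{2}}^{(k)}-f(\mathcal{U}_\ell)\tvec{1}$ is estimated by splitting it into (i) the difference between $\hat{f}$ evaluated at the stage reconstructions $\mathcal{U}_{j_\ell}^{(k)}$ and at $\mathcal{U}_\ell$ on the shifted stencil, which vanishes because the stages collapse to $u_\iota^n$ as $\dtau_\ell \rightarrow 0$, and (ii) the classical consistency term, controlled by the total variation bound of Assumption \ref{assumptions}. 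Your single estimate $\bigl|\hat{h}_{i+\frac{1}{2}}^{(N)} - cf(u_i^n)\bigr| \leq L \sum_j |u_{i+j}^n - u_i^n|$ is equivalent to this decomposition once the uniform $L$ is in hand, and summing it against the TV bound is the right way to finish. So the proposal is on the correct track and identifies the right obstruction, but it stops short of proving the one estimate on which the theorem actually turns.
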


\begin{proof}
We follow the proof of the Lax-Wendroff theorem given in \cite[Chapter 12]{leveque1992numerical} with changes and details added where necessary. Throughout the proof we let $\mathcal{U}_\ell(x,t)$ denote the piecewise constant function that takes the solution value $u_i^n$ in $(x_i, x_{i+1}] \times (t_{n-1}, t_n]$ on the $\ell$th grid. Similarly, for $j = 1,\dots,s$ we let $\mathcal{U}_{j_\ell}^{(k)}(x,t)$ be the piecewise constant function that takes the value $U_{j_i}^{(k)}$ in $(x_i, x_{i+1}] \times (t_{n-1}, t_n]$. 

The discretization \eqref{eq:conservative_form} can equivalently be expressed as
\begin{equation} \label{eq:FV_discretization}
    \dx_\ell \left[ \mathcal{U}_\ell(x_i,t_{n+1}) - \mathcal{U}_\ell(x_i,t_n) \right] + \dt_\ell \left[ \hat{h}_{i+\frac{1}{2}}^{(N)} - \hat{h}_{i-\frac{1}{2}}^{(N)} \right] = 0.
\end{equation}
Let $\varphi \in C_0^1$ be a compactly supported test function. Multiply \eqref{eq:FV_discretization} by $\varphi(x_i,t_n)$ and sum over all $n$ and $i$. Due to the compact support of $\varphi$, these sums can be extended arbitrarily beyond the bounds of $x$ and $t$, hence we obtain
\begin{equation} \label{eq:first_sums}
    \begin{aligned}
        \dx_\ell \sum_{n=0}^\infty \sum_{i=-\infty}^\infty & \varphi(x_i,t_n) \left( \mathcal{U}_\ell(x_i,t_{n+1}) - \mathcal{U}_\ell(x_i,t_n) \right) \\
        &+ \dt_\ell \sum_{n=0}^\infty \sum_{i=-\infty}^\infty \varphi(x_i,t_n) \left[ \hat{h}_{i+\frac{1}{2}}^{(N)} - \hat{h}_{i-\frac{1}{2}}^{(N)} \right] = 0.
    \end{aligned}  
\end{equation}
At this point we will use summation by parts on the sums in \eqref{eq:first_sums}. Recall that for two sequences $(a_i)$ and $(b_i)$, the summation by parts formula can be expressed as
$$
\sum_{j=1}^M a_j (b_j - b_{j-1}) = a_M b_M - a_1 b_0 - \sum_{j=1}^M (a_{j+1} - a_j) b_j.
$$
Applied to the $n$-sum in the first term and to the $i$-sum in the second term in \eqref{eq:first_sums}, this results in
\begin{equation} \label{eq:terms_approximating_integrals}
    \begin{aligned}
        \dx_\ell \dt_\ell & \left[ \sum_{n=1}^\infty \sum_{i=-\infty}^\infty \right. \left( \frac{\varphi(x_i,t_n) - \varphi(x_i,t_{n-1})}{\dt_\ell} \right) \mathcal{U}_\ell(x_i,t_n) \\
        &+ \left. \sum_{n=0}^\infty \sum_{i=-\infty}^\infty \left( \frac{\varphi(x_{i+1},t_n) - \varphi(x_i,t_n)}{\dx_\ell} \right) \hat{h}_{i+\frac{1}{2}}^{(N)} \right] \\
        &= -\dx_\ell \sum_{i=-\infty}^\infty \varphi(x_i,0) \mathcal{U}_\ell(x_i,0).
    \end{aligned}
\end{equation}
Here we have used the fact that $\varphi$ has compact support in order to eliminate all boundary terms except the one at $n=0$.

We now let $\ell \rightarrow \infty$ and investigate the convergence of the terms arising in \eqref{eq:terms_approximating_integrals}. The first and third terms are identical to those in the proof of the Lax-Wendroff theorem \cite[Chapter 12]{leveque1992numerical}. Thus, it can immediately be concluded that the first term converges to
$$
\int_0^\infty \int_{-\infty}^\infty \varphi_t u(x,t) \text{d}x \text{d}t
$$
and the third one to
$$
-\int_{-\infty}^\infty \varphi(x,0) u(x,0) \text{d}x.
$$
It remains to investigate the second term in \eqref{eq:terms_approximating_integrals}.

Expanding $\hat{h}_{i+\frac{1}{2}}^{(N)}$ using \eqref{eq:H-flux} yields for the second term in \eqref{eq:terms_approximating_integrals}
\begin{align*}
\dx_\ell & \dt_\ell \sum_{n=0}^\infty \sum_{i=-\infty}^\infty \left( \frac{\varphi(x_{i+1},t_n) - \varphi(x_i,t_n)}{\dx_\ell} \right) \\
& \sum_{k=0}^{N-1} \mu_k \tvec{b}^\top (\tmat{I} + \mu_k \tmat{A})^{-1} \left( \prod_{l=k+1}^{N-1} \phi(-\mu_{l}) \right) \hat{\tvec{f}}_{i+\frac{1}{2}}^{(k)}.
\end{align*}
To this expression, add and subtract
\begin{align*}
\dx_\ell & \dt_\ell \sum_{n=0}^\infty \sum_{i=-\infty}^\infty \left( \frac{\varphi(x_{i+1},t_n) - \varphi(x_i,t_n)}{\dx_\ell} \right) \\
& \sum_{k=0}^{N-1} \mu_k \tvec{b}^\top (\tmat{I} + \mu_k \tmat{A})^{-1} \left( \prod_{l=k+1}^{N-1} \phi(-\mu_{l}) \right) f(\mathcal{U}_\ell(x_i,t_n)) \tvec{1}
\end{align*}
in order to obtain
\begin{equation*}
    \begin{alignedat}{3}
    & \dx_\ell && \dt_\ell \sum_{n=0}^\infty \sum_{i=-\infty}^\infty \left( \frac{\varphi(x_{i+1},t_n) - \varphi(x_i,t_n)}{\dx_\ell} \right) \\
    &&& \left[ \sum_{k=0}^{N-1} \mu_k \tvec{b}^\top (\tmat{I} + \mu_k \tmat{A})^{-1} \tvec{1} \left( \prod_{l=k+1}^{N-1} \phi(-\mu_{l}) \right) \right] f(\mathcal{U}_\ell(x_i,t_n)) \\
    + & \dx_\ell && \dt_\ell \sum_{n=0}^\infty \sum_{i=-\infty}^\infty \left( \frac{\varphi(x_{i+1},t_n) - \varphi(x_i,t_n)}{\dx_\ell} \right) \\
    &&& \sum_{k=0}^{N-1} \mu_k \tvec{b}^\top (\tmat{I} + \mu_k \tmat{A})^{-1} \left( \prod_{l=k+1}^{N-1} \phi(-\mu_{l}) \right) \left( \hat{\tvec{f}}_{i+\frac{1}{2}}^{(k)} - f(\mathcal{U}_\ell(x_i,t_n)) \tvec{1} \right).
    \end{alignedat}
\end{equation*}
The bracketed part of this expression evaluates to $c(\mu_0,\dots,\mu_{N-1})$, as in the proof of Theorem \ref{thm:local_conservation}. Thus, the first half of this expression equals
$$
c(\mu_0,\dots,\mu_{N-1}) \dx_\ell \dt_\ell \sum_{n=0}^\infty \sum_{i=-\infty}^\infty \left( \frac{\varphi(x_{i+1},t_n) - \varphi(x_i,t_n)}{\dx_\ell} \right) f(\mathcal{U}_\ell(x_i,t_n)).
$$
Apart from the factor $c(\mu_0,\dots,\mu_{N-1})$, which is independent of $\ell$, this term appears identically in the proof of the Lax-Wendroff theorem \cite[Chapter 12]{leveque1992numerical} and converges to
$$
c(\mu_0,\dots,\mu_{N-1}) \int_0^\infty \int_{-\infty}^\infty \varphi_x f \text{d}x \text{d}t.
$$

It remains to show that 
\begin{equation} \label{eq:term_to_vanish}
    \begin{aligned}
    \dx_\ell & \dt_\ell \sum_{n=0}^\infty \sum_{i=-\infty}^\infty \left( \frac{\varphi(x_{i+1},t_n) - \varphi(x_i,t_n)}{\dx_\ell} \right) \\
    & \sum_{k=0}^{N-1} \mu_k \tvec{b}^\top (\tmat{I} + \mu_k \tmat{A})^{-1} \left( \prod_{l=k+1}^{N-1} \phi(-\mu_{l}) \right) \left( \hat{\tvec{f}}_{i+\frac{1}{2}}^{(k)} - f(\mathcal{U}_\ell(x_i,t_n)) \tvec{1} \right)
    \end{aligned}
\end{equation}
vanishes in the limit $\ell \rightarrow \infty$. Since $\mathcal{U}_\ell$ and $\mathcal{U}_{j_\ell}^{(k)}$ are both constant in $(x_i,x_{i+1}] \times (t_{n-1},t_n]$, we can rewrite \eqref{eq:term_to_vanish} as
\begin{align*}
\sum_{n=0}^\infty & \sum_{i=-\infty}^\infty \int_{x_i}^{x_{i+1}} \int_{t_{n-1}}^{t_n} \varphi_x(x,t_n) \\
& \sum_{k=0}^{N-1} \mu_k \tvec{b}^\top (\tmat{I} + \mu_k \tmat{A})^{-1} \left( \prod_{l=k+1}^{N-1} \phi(-\mu_{l}) \right) \left( \hat{\tvec{f}}_{i+\frac{1}{2}}^{(k)}(x,t) - f(\mathcal{U}_\ell(x,t)) \tvec{1} \right).
\end{align*}
Here, $\hat{\tvec{f}}_{i+\frac{1}{2}}^{(k)}(x,t)$ is placeholder notation for the vector whose $j$th element is
$$
\hat{f}_{i+\frac{1}{2}} \left( \mathcal{U}_{j_\ell}^{(k)}(x-p\dx,t), \dots, \mathcal{U}_{j_\ell}^{(k)}(x+q\dx,t) \right).
$$
To establish that this term indeed vanishes, it suffices to show that
\begin{equation} \label{eq:term_to_be_bounded}
    \left| \sum_{k=0}^{N-1} \mu_k \tvec{b}^\top (\tmat{I} + \mu_k \tmat{A})^{-1} \left( \prod_{l=k+1}^{N-1} \phi(-\mu_{l}) \right) \left( \hat{\tvec{f}}_{i+\frac{1}{2}}^{(k)}(x,t) - f(\mathcal{U}_\ell(x,t)) \tvec{1} \right) \right|
\end{equation}
tends to zero for almost every $x$. By the Cauchy-Schwarz and triangle inequalities, \eqref{eq:term_to_be_bounded} is bounded by
\begin{equation} \label{eq:next_term_to_be_bounded}
\sum_{k=0}^{N-1} \| \mu_k \tvec{b}^\top (\tmat{I} + \mu_k \tmat{A})^{-1} \| \left( \prod_{l=k+1}^{N-1} | \phi(-\mu_{l}) | \right) \left\| \hat{\tvec{f}}_{i+\frac{1}{2}}^{(k)}(x,t) - f(\mathcal{U}_\ell(x,t)) \tvec{1} \right\|,
\end{equation}
where the Euclidean norm in $\mathbb{R}^s$ is used. The only term in \eqref{eq:next_term_to_be_bounded} that depends on $\ell$ is $\left\| \left( \hat{\tvec{f}}_{i+\frac{1}{2}}^{(k)}(x,t) - f(\mathcal{U}_\ell(x,t)) \tvec{1} \right) \right\|$ and it therefore suffices to show that this term vanishes for almost every $x$.

Recall that $f(\mathcal{U}_\ell) = \hat{f}_{i+\frac{1}{2}}(\mathcal{U}_\ell, \dots, \mathcal{U}_\ell)$ by consistency. A standard norm inequality gives
\begin{align*}
    & \left\| \hat{\tvec{f}}_{i+\frac{1}{2}}^{(k)}(x,t) - f(\mathcal{U}_\ell(x,t)) \tvec{1} \right\| \\
    \leq \sqrt{s} \max_{1 \leq j \leq s} &\left| \hat{f}_{i+\frac{1}{2}}\left( \mathcal{U}_{j_\ell}^{(k)}(x-p\dx,t), \dots, \mathcal{U}_{j_\ell}^{(k)}(x+q\dx,t) \right) \right. \\
    &\left. -\hat{f}_{i+\frac{1}{2}}(\mathcal{U}_\ell(x,t), \dots, \mathcal{U}_\ell(x,t)) \right| \\
    \leq \sqrt{s} \max_{1 \leq j \leq s} &\left\{ \left| \hat{f}_{i+\frac{1}{2}} \left( \mathcal{U}_{j_\ell}^{(k)}(x-p\dx,t), \dots, \mathcal{U}_{j_\ell}^{(k)}(x+q\dx,t) \right) \right. \right. \\
    &- \left. \hat{f}_{i+\frac{1}{2}}(\mathcal{U}_\ell(x-p\dx,t), \dots, \mathcal{U}_\ell(x+q\dx,t)) \right| \\
    + & \left| \hat{f}_{i+\frac{1}{2}}(\mathcal{U}_\ell(x-p\dx,t), \dots, \mathcal{U}_\ell(x+q\dx,t)) \right. \\
    &- \left. \left. \hat{f}_{i+\frac{1}{2}}(\mathcal{U}_\ell(x,t), \dots, \mathcal{U}_\ell(x,t)) \right| \right\}.
\end{align*}
Since $\hat{f}_{i+\frac{1}{2}}$ is Lipschitz continuous in each argument there is a constant $L$ such that the final expression is bounded by
\begin{align*}
    & L\sqrt{s} \max_{1 \leq j \leq s} \max_{p \leq r \leq q} \left| \mathcal{U}_{j_\ell}^{(k)}(x+r\dx,t) - \mathcal{U}_\ell(x+r\dx,t) \right| \\
    + & L \sqrt{s} \max_{p \leq r \leq q} \left| \mathcal{U}_\ell(x+r\dx,t) - \mathcal{U}_\ell(x,t) \right|.
\end{align*}
The second of these terms appear in the proof of the Lax-Wendroff theorem \cite[Chapter 12]{leveque1992numerical} and vanishes in the limit for almost every $x$ due to the bounded total variation of $\mathcal{U}_\ell$. Further, from \eqref{eq:RK_stages} and \eqref{eq:RK_step} and the fact that $\iter{0} = u_i^n$ it follows that for every $j=1,\dots,s$ and $k \geq 0$, $U_{j_\iota}^{(k)} = u_\iota^n$ for each $\iota = i-p, \dots, i+q$ in the limit of vanishing $\dtau_\ell$. Consequently, $\left| \mathcal{U}_{j_\ell}^{(k)}(\cdot,t) - \mathcal{U}_\ell(\cdot,t) \right|$ vanishes identically as $\ell \rightarrow \infty$.
\end{proof}

A few remarks about Theorem \ref{thm:LW} are in place: First, we demand from a useful iterative method that it converges to the correct solution as $N \rightarrow \infty$. Thus, the pseudo-time steps should be chosen in a way that ensures that $c \rightarrow 1$, or equivalently,
\begin{equation} \label{eq:RK_stability_requirement}
\prod_{l=0}^{N-1} \phi(-\mu_{l}) \rightarrow 0 \quad \text{as} \quad N \rightarrow \infty.
\end{equation}
The simplest way to do this is to choose each pseudo-time step so that $|\phi(-\mu_{l})| < 1$, i.e. to stay within the stability region of the RK method.

Secondly, observe that if $\phi(-\mu_{l}) = 0$ for any $l$, then $c=1$ irrespective of how many further iterations that are carried out. For some RK methods such a root exists while for others it does not. For instance, the explicit Euler method has stability function $\phi(-\mu) = 1 - \mu$, hence $\mu = 1$ is a root of $\phi$. On the other hand, Heun's method has stability function $\phi(-\mu) = 1 - \mu + \mu^2 /2 > 0$ for all $\mu \in \mathbb{R}$ and therefore does not have any real roots. A strategy is thus to choose a RK method with a root, begin the pseudo-time iterations with a step that corresponds to this root, then resort to a conventional method for choosing the remaining pseudo-time steps. The initial iteration will not change the limit as $N \rightarrow \infty$ if the remaining pseudo-time steps are selected within the stability region.


\subsection{Numerical results}

Next we validate Theorem \ref{thm:LW} by numerically solving a series of linear and nonlinear conservation laws.

\subsubsection{Linear advection}

The first setting is the linear advection equation \eqref{eq:advection_equation}. The computational domain is $x \in (-1,1]$, $t \in (0,0.25]$ and periodic boundary conditions are used. The upwind flux $\hat{f}_{i+\frac{1}{2}} = u_{i}^{n+1}$ is used for the spatial discretization. The resulting finite volume method becomes
$$
\frac{u_i^{n+1} - u_i^n}{\dt} + \frac{1}{\dx} \left( u_i^{n+1} - u_{i-1}^{n+1} \right) = 0, \qquad i = 1, \dots, m.
$$
Throughout the experiments the temporal and spatial increments are chosen to be equal: $\dt = \dx$.

We validate Theorem \ref{thm:LW} by studying the convergence of the numerical scheme to the solution of the original advection problem \eqref{eq:advection_equation} as well as to the solution of the modified version $u_t + c u_x = 0$. As pseudo-time iteration we use the explicit Euler method, Heun's method and the third order strong stability preserving RK method SSPRK3 \cite{shu1988efficient}. Theorem \ref{thm:LW} predicts that these methods respectively will modify the propagation speed by the factor
\begin{align*}
c(\mu_0,\dots,\mu_{N-1}) &= 1 - \prod_{l=0}^{N-1} (1-\mu_{l}), \\
c(\mu_0,\dots,\mu_{N-1}) &= 1 - \prod_{l=0}^{N-1} \left( 1 - \mu_{l} + \frac{\mu_{l}^2}{2} \right), \\
c(\mu_0,\dots,\mu_{N-1}) &= 1 - \prod_{l=0}^{N-1} \left( 1 - \mu_{l} + \frac{\mu_{l}^2}{2} - \frac{\mu_{l}^3}{6} \right).
\end{align*}
Here, we fix $N=4$ and consider two different sequences of pseudo-time steps, one with constant and one with variable step sizes. The first is given by $\mu_{l} = 1/20$ and the second by $\mu_{l} = 2^{-l}$ for $l = 0, \dots, 3$. The corresponding modification constants $c(\mu_0, \dots, \mu_3)$ are shown in Table \ref{tab:c}. Note that with the second sequence, $c=1$ for the explicit Euler method. This is due to the fact that $\mu_0$ is a root of the stability polynomial in this case.

\begin{table}[h!]
\begin{center}
\caption{Modification constant $c(\mu_0, \dots, \mu_3)$ for different ERK methods and choices of pseudo-time steps}
\begin{tabular}{ |c|c|c|c| } 
 \hline 
 & Euler & Heun & SSPRK3 \\
 \hline
 $\mu_l = \frac{1}{20}$ & $0.1855$ & $0.1812$ & $0.1813$ \\
 \hline 
 $\mu_l = 2^{-l}$ & $1$ & $0.7845$ & $0.8616$ \\
 \hline
\end{tabular}
\label{tab:c}
\end{center}
\end{table}

The advection problem is solved on a sequence of grids with grid spacing $\dx = 2 / (40 \times 2^j)$ for $j = 1, \dots, 12$. The $L^2$ error is calculated with respect to the exact solution of the original and modified equations. The results are shown in Fig. \ref{fig:advection_convergence}. When constant pseudo-time steps are used (Fig. \ref{fig:advection_convergence_constant}), the lines overlap. None of the methods converge to the solution of the original problem since the numerical solution is moving with the incorrect speed. Instead, all three schemes converge to the solution of their respective modified conservation law. Similar results are seen when variable pseudo-time steps are used (Fig. \ref{fig:advection_convergence_variable}).

\begin{figure}[t!]
    \centering
    \begin{subfigure}{0.49\textwidth}
    \includegraphics[width=\textwidth]{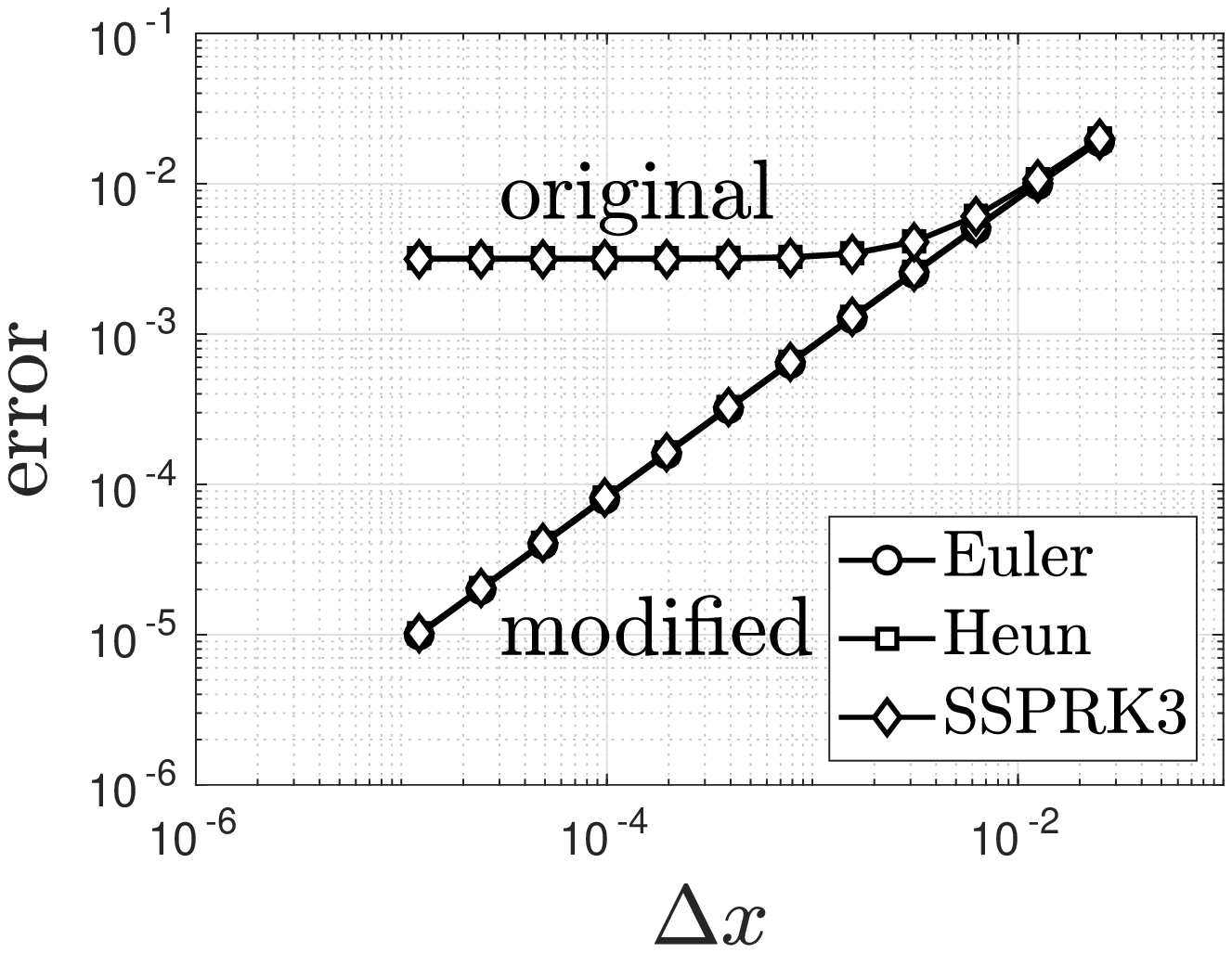}
    \caption{Constant pseudo-time steps, $\mu_l = \frac{1}{20}$.}
    \label{fig:advection_convergence_constant}
    \end{subfigure}
    \begin{subfigure}{0.49\textwidth}
    \includegraphics[width=\textwidth]{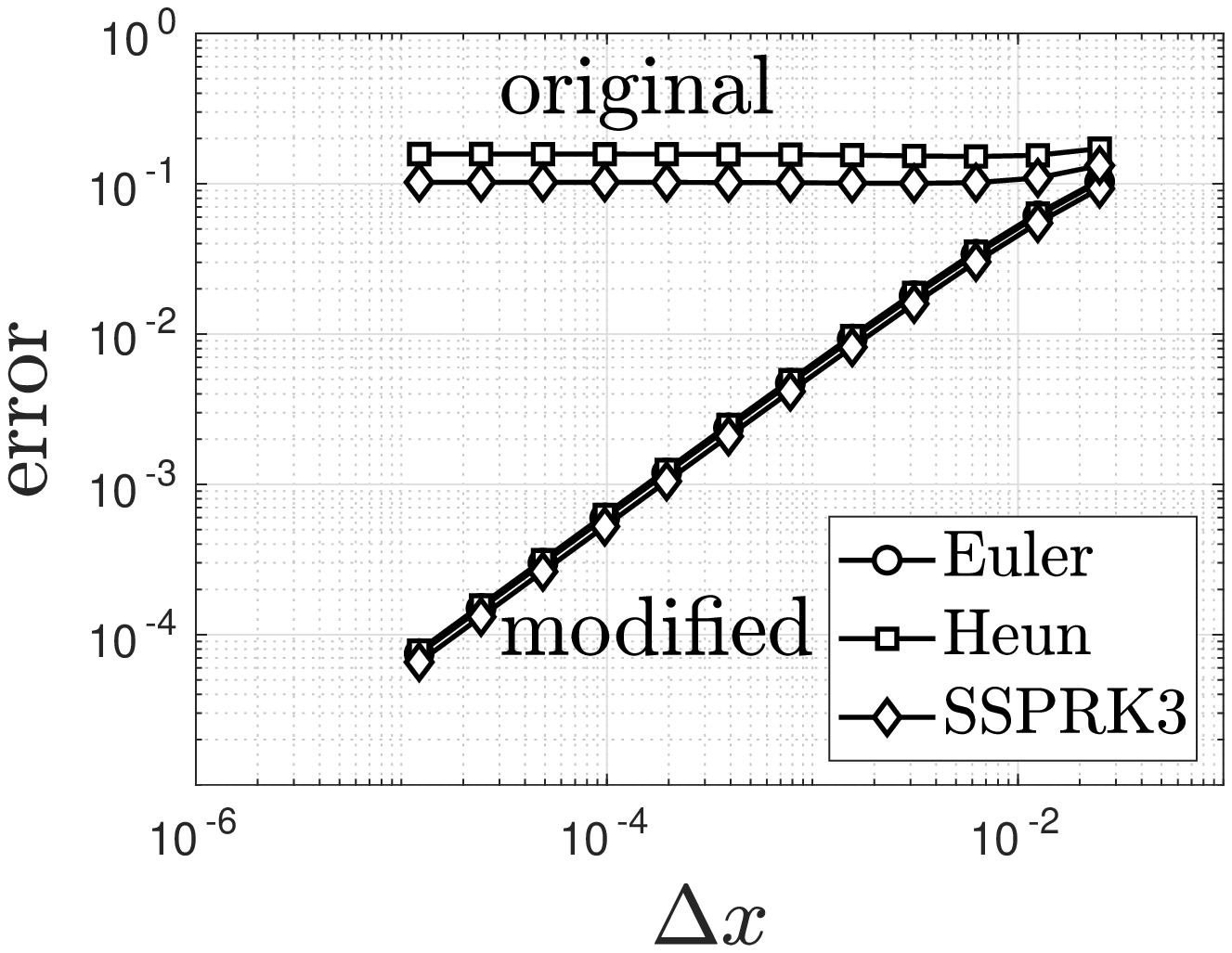}
    \caption{Variable pseudo-time steps, $\mu_l = 2^{-l}$.}
    \label{fig:advection_convergence_variable}
    \end{subfigure}
    \caption{$L^2$-errors vs $\dx$ with respect to the exact solution of the original and modified advection equations.}
    \label{fig:advection_convergence}
\end{figure}

Solutions of the advection equation \eqref{eq:advection_equation} propagate from left to right at unit speed. In this setting it follows from Theorem \ref{thm:LW} that the numerical solution will propagate with the speed $c(\mu_0,\dots,\mu_{N-1})$ in the limit $\dx, \dt \rightarrow 0$. Thus, if $c \neq 1$ in each physical time step, the numerical solution will drift out of phase. However, it should be noted that Theorem \ref{thm:LW} is an asymptotic result and does not necessarily imply that $c(\mu_0, \dots, \mu_{N-1})$ alone captures the propagation speed error on coarse grids. In fact, we may generally expect a contribution to the speed from dispersion errors built into the discretization; see e.g. \cite{linders2015uniformly,tam1993dispersion,trefethen1982group, linders2016summation, linders2017summation, linders2020accurate} for details and remedies.

In the next experiment we verify that $c$ depends on $N$ as predicted by Theorem \ref{thm:LW} by measuring the propagation speed $\bar{c}$ of the numerical solution while varying $N$. To measure $\bar{c}$ we track the $x$-coordinate of the maximum of the propagating pulse through time. To get accurate measurements we extend the computational domain to $x \in (-1/5, 1/5]$, $t \in (0,6]$.

For this experiment we fix $\dx = 0.003$ and set $\mu_{l} = \mu$ to be fixed for each $l=0,\dots,N-1$. The measured propagation speed error $1 - \bar{c}$ for different $\mu$ are shown in Fig. \ref{fig:Heun_speed_vs_iterations} for Heun's method and in Fig. \ref{fig:SSP3_speed_vs_iterations} for SSPRK3. The solid lines show the theoretically predicted error,
$$
1 - c(\mu_0,\dots,\mu_{N-1}) = \phi(-\mu)^N.
$$
The measured and theoretical speed errors agree well for $\mu = 0.05$ and  $\mu = 0.2$. For $\mu = 0.5$ a discrepancy between theory and measurement is seen for small errors. This suggests that the dispersion error intrinsic to the finite volume scheme is starting to dominate the propagation speed error.

\begin{figure}[t!]
    \centering
    \begin{subfigure}{0.49\textwidth}
    \includegraphics[width=\textwidth]{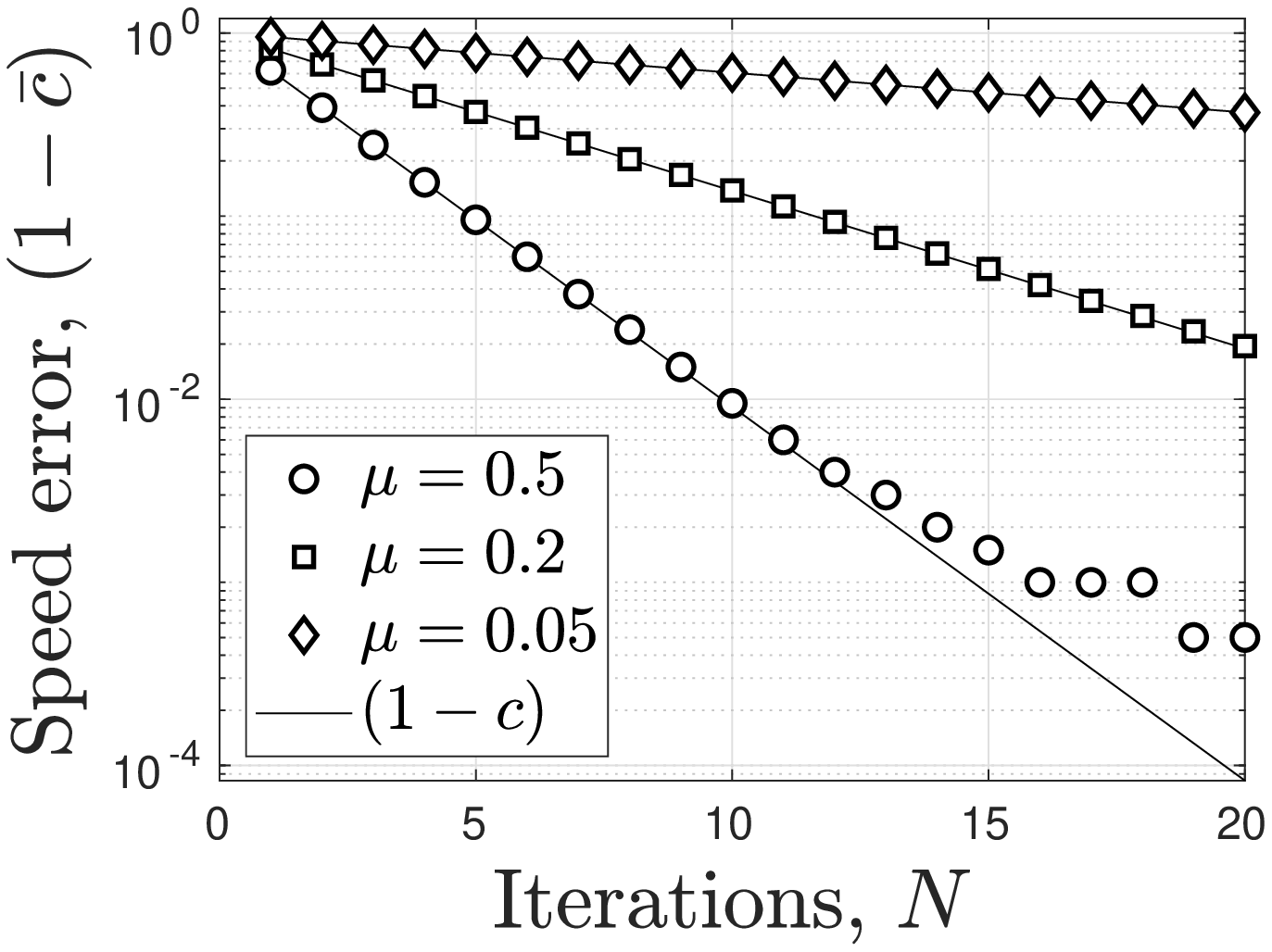}
    \caption{Heun's method.}
    \label{fig:Heun_speed_vs_iterations}
    \end{subfigure}
    \begin{subfigure}{0.49\textwidth}
    \includegraphics[width=\textwidth]{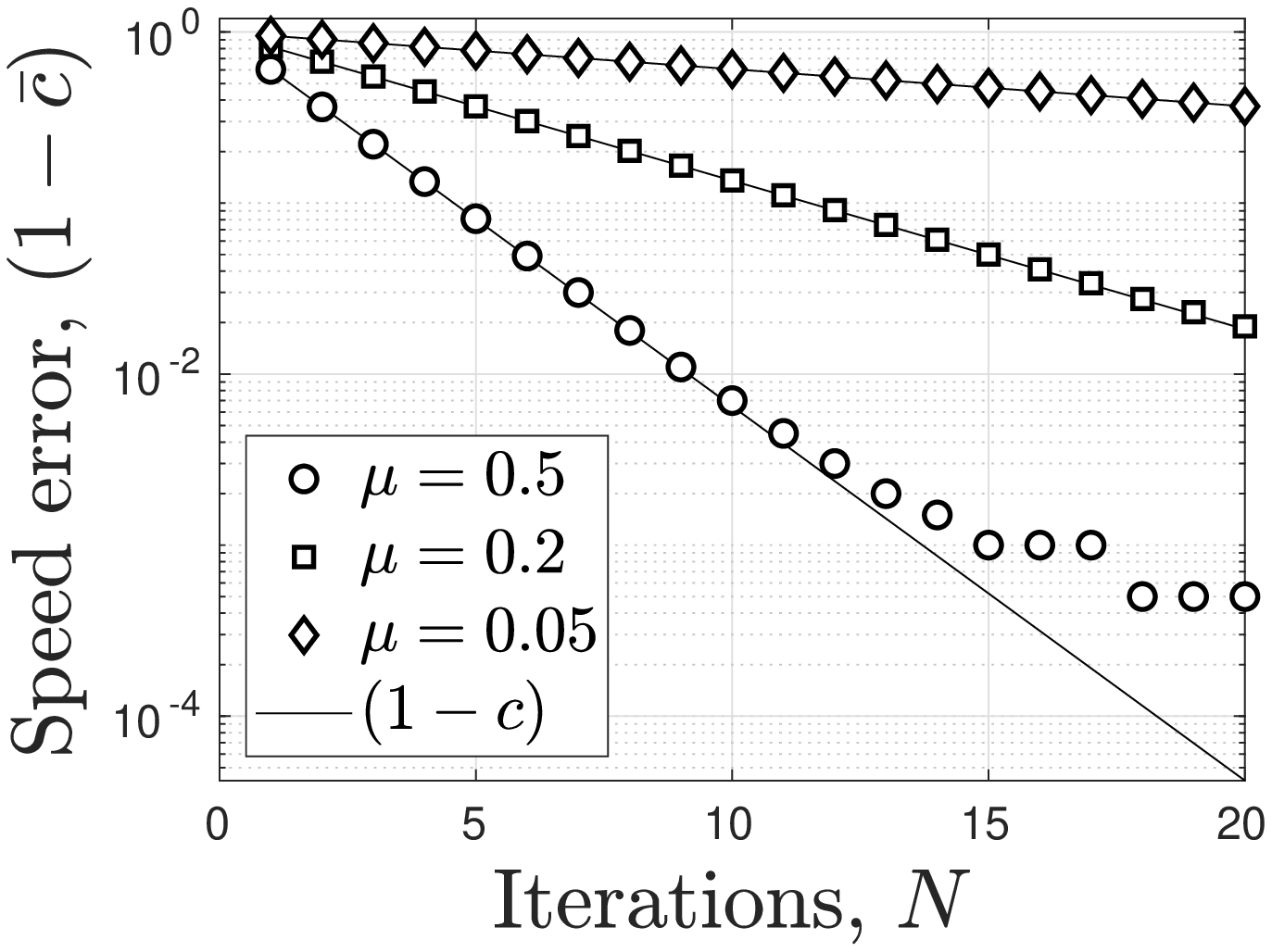}
    \caption{SSPRK3.}
    \label{fig:SSP3_speed_vs_iterations}
    \end{subfigure}
    \caption{Proparagion speed error vs number of iterations per time step for the linear advection problem.}
    \label{fig:speed_vs_iterations}
\end{figure}


\subsection{Burgers' equation}

Next we consider a triangular shock wave propagating under the 1D Burgers' equation with periodic boundary conditions:
\begin{equation} \label{eq:Burger_shock_triangle}
\begin{aligned}
u_t + \left( \frac{u^2}{2} \right)_x &= 0, \quad x \in (0, 1], \\
u(x,0) &=
\begin{cases}
x & \text{ if } x \leq 0.5, \\
0 & \text{ otherwise}.
\end{cases}
\end{aligned}
\end{equation}
The exact solution to this problem is given by
$$
u(x,t) =
\begin{cases}
\frac{x}{t+1} & \text{ if } x \leq \frac{1}{2} \sqrt{t+1}, \\
0 & \text{ otherwise}.
\end{cases}
$$
Modifying the conservation law to $u_t + (c u^2/2)_x = 0$ while retaining the same initial condition modifies the exact solution to
$$
u(x,t) =
\begin{cases}
\frac{x}{ct+1} & \text{ if } x \leq \frac{1}{2} \sqrt{ct+1}, \\
0 & \text{ otherwise}.
\end{cases}
$$
For this problem we therefore expect that pseudo-time iterations to affect both the speed and the amplitude of the shock front.

As for the advection problem, we investigate the $L^2$ convergence of the numerical scheme to the original and modified conservation laws. We once again use an upwind numerical flux and implicit Euler in time;
\begin{equation} \label{eq:Burgers_discretization}
\frac{u_i^{n+1} - u_i^n}{\dt} + \frac{1}{\dx} \left( \frac{u_i^{n+1}}{2} - \frac{u_{i-1}^{n+1}}{2} \right) = 0, \qquad i = 1, \dots, m.
\end{equation}
We run the simulation to time $t=0.1$ with $\dt = \dx$ using the sequence of grids $\dx = 1/(25 \times 2^j)$ for $j=1,\dots,15$. The explicit Euler method is used as pseudo-time iterator with $N=12$ and $\mu_l = 1/4$ for $l=0,\dots,11$. The corresponding modification constant is $c \approx 0.9683$. Fig. \ref{fig:shock_convergence_triangle} shows that the numerical solution converges to the solution of the modified conservation law as expected.

Fixing $\dx = 0.004$ and setting $\mu_l = \mu = 1/4$, we run the simulation to time $t=1$ with different choices of $N$. Fig. \ref{fig:shock_locations_triangle} shows the numerical solutions together with the initial data (dashed line) and the exact solution (dotted line). The locations of the tips of the shock waves as predicted by Theorem \ref{thm:LW} are indicated as crosses in the figure. There is good agreement between theory and experiment, although the shocks appear slightly smeared due to the built-in dissipation in the numerical scheme.

\begin{figure}[t!]
    \centering
    \begin{subfigure}{0.49\textwidth}
    \includegraphics[width=\textwidth]{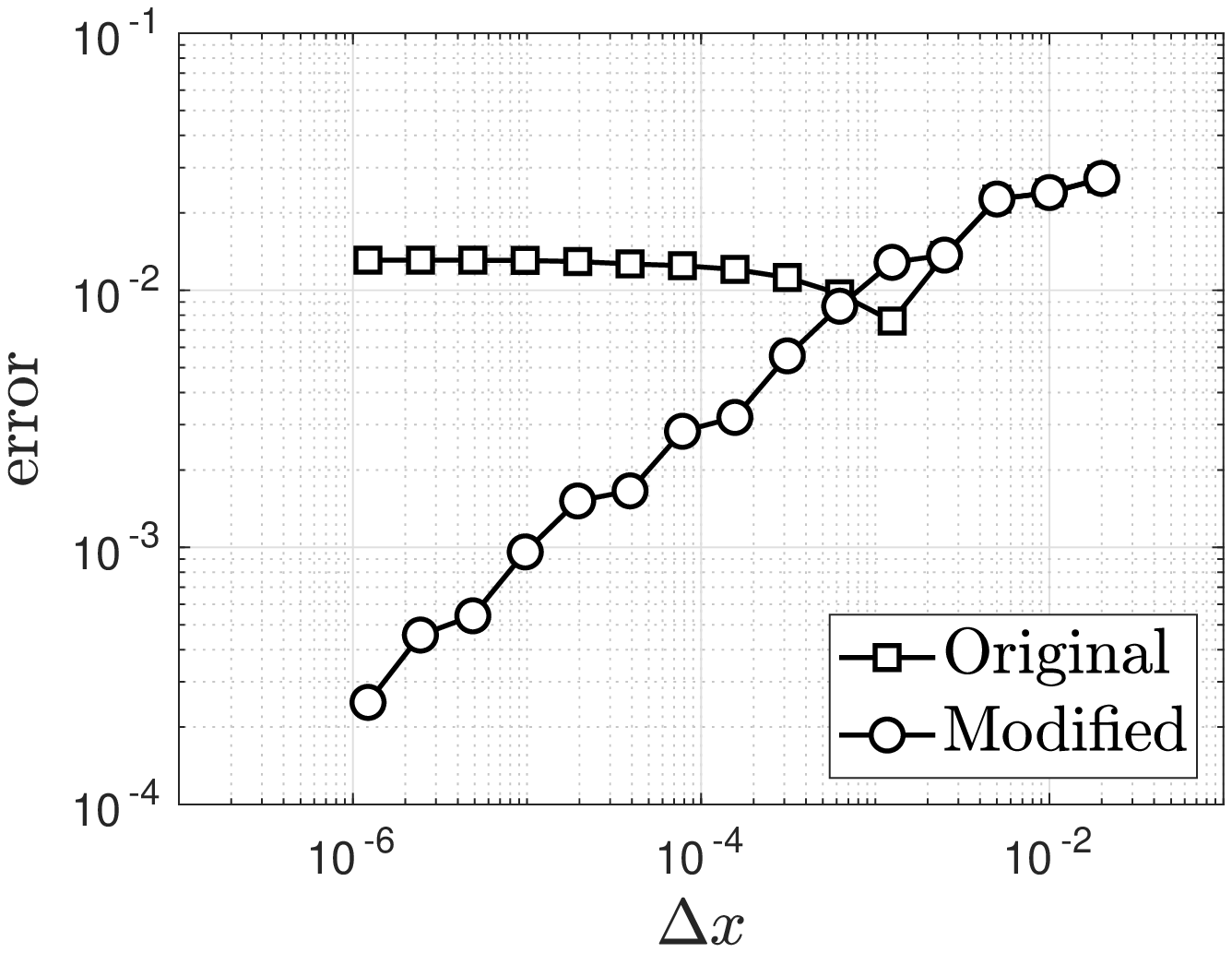}
    \caption{Triangular shock.}
    \label{fig:shock_convergence_triangle}
    \end{subfigure}
    \begin{subfigure}{0.49\textwidth}
    \includegraphics[width=\textwidth]{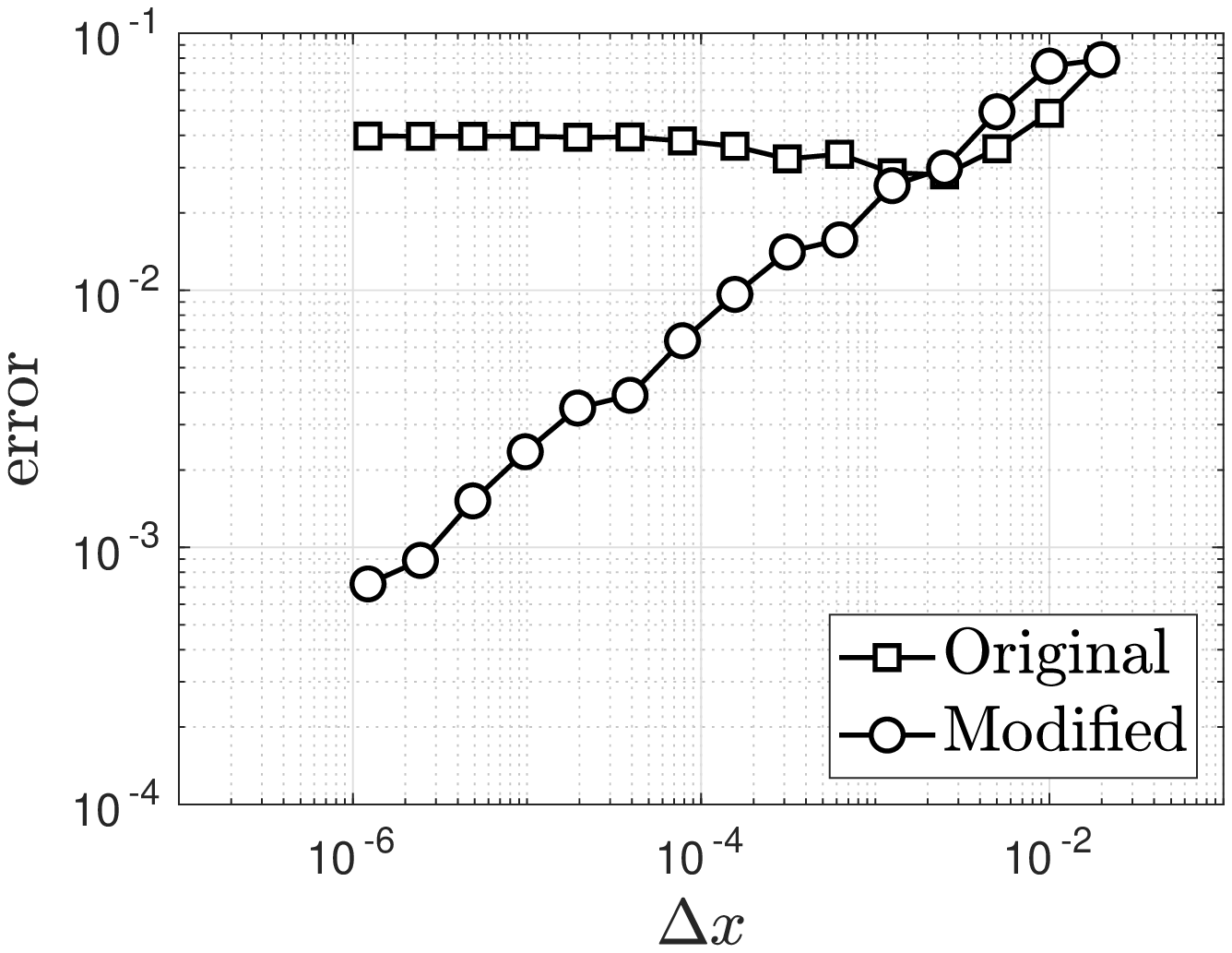}
    \caption{Step function.}
    \label{fig:shock_convergence_step}
    \end{subfigure}
    \caption{$L^2$-errors vs $\dx$ with respect to the exact solution of the original and modified Burgers' equations for (a) the triangular shock and (b) the step function.}
    \label{fig:shock_convergence}
\end{figure}

\begin{figure}[t!]
    \centering
    \includegraphics[width=0.9\textwidth]{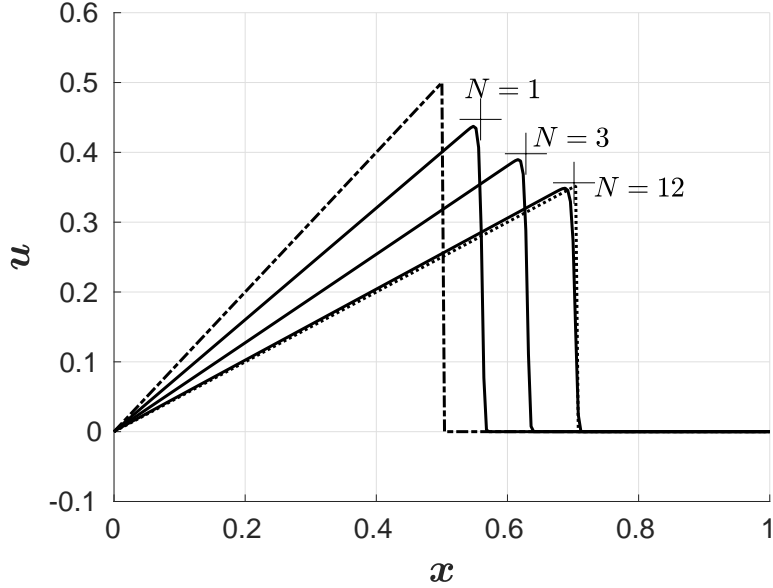}
    \caption{Numerical solutions and predicted shock locations (crosses) for the solution of Burgers' equation \eqref{eq:Burgers_equation} using different numbers of pseudo-time iterations, $N$. The dotted line indicates the exact solution. The dashed line shows the initial data.}
    \label{fig:shock_locations_triangle}
\end{figure}

Next, we repeat the experiments but change the initial data to a step function,
$$
u(x,0) =
\begin{cases}
1 \text{ if } x \leq 0.24, \\
0 \text{ otherwise}.
\end{cases}
$$
Since this data is not periodic, we impose the boundary condition $u(0,t) = 1$. Theorem \ref{thm:LW} does not treat boundary conditions and it is therefore interesting to see if it still provides useful predictions of the behaviour of the numerical solution in this setting.

The exact solution is the initial step function travelling to the right. The shock speed is given by the Rankine-Hugoniot condition as
$$
s = \frac{\frac{u_l^2}{2} - \frac{u_r^2}{2}}{u_l - u_r} = \frac{1}{2},
$$
where $u_{l,r}$ denote left and right states of the discontinuity respectively. The shock speed of the modified conservation law is instead $c/2$.

We use the exact same grids and pseudo-time iterations as for the triangular shock and measure the $L^2$ error with respect to the exact and modified conservation laws. The results are shown in Fig. \ref{fig:shock_convergence_step}. Convergence is once again seen towards the modified equation, thus verifying that Theorem \ref{thm:LW} predicts the propagation speed error correctly despite the added boundary condition.

Next we extend the time domain to $t \in (0,1]$, fix $\dx = \dt = 4 \dtau = 1/100$ and vary the number of iterations, $N$. The computed solutions using $N = 1$, $N = 3$ and $N = 12$ are shown in Fig. \ref{fig:shock_locations} together with the predicted shock locations (dashed lines). Once again, there is good agreement between prediction and experiment, although numerical dissipation smears the shock fronts somewhat.

\begin{figure}[t!]
    \centering
    \includegraphics[width=0.9\textwidth]{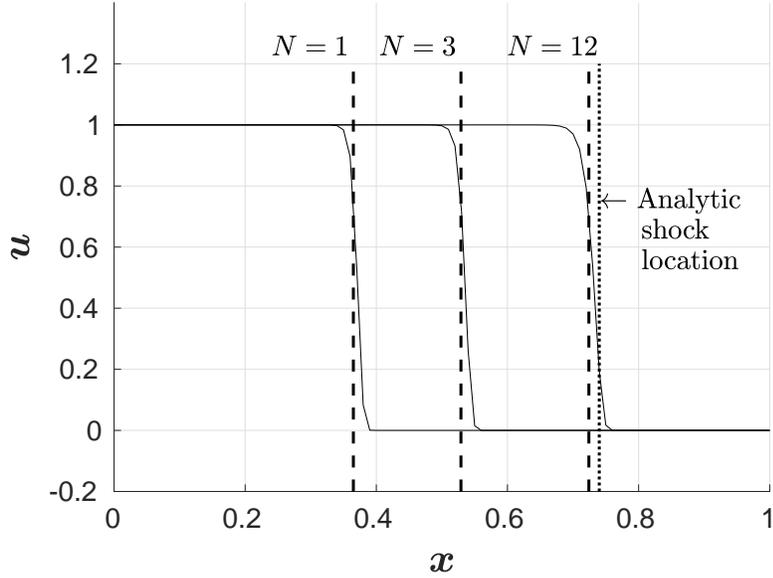}
    \caption{Numerical solutions and predicted shock locations (dashed lines) for the solution of Burgers' equation \eqref{eq:Burgers_equation} using different numbers of pseudo-time iterations, $N$.}
    \label{fig:shock_locations}
\end{figure}

As mentioned previously, the shock speed error can be eliminated entirely for the explicit Euler method by noting that $\mu = 1$ is a root of the stability polynomial $1 - \mu$. To highlight the effect of this, we introduce two strategies for choosing the pseudo-time steps:
\begin{description}
\item[Strategy 1:] Use $N=12$ iterations with $\dtau_{0,\dots,11} = \dt/4$.
\item[Strategy 2:] Use $N=9$ iterations with $\dtau_0 = \dt$ and $\dtau_{1,\dots,8} = \dt/4$.
\end{description}
The first strategy is the same that we have used in the experiments so far. The second one ensures that $c=1$ by taking a large initial pseudo-time step. Note that both strategies correspond to integration in pseudo-time to the same point; $\tau = 3 \dt$.

The relative residuals of the pseudo-time iterates in the first physical time step are shown in Fig. \ref{fig:shock_residual} for the two strategies. The large initial pseudo-time step in {\bf Strategy 2} results in a considerably greater residual reduction than the corresponding iteration using {\bf Strategy 1}. Interestingly, subsequent iterations yield faster convergence of the residual using {\bf Strategy 2} as seen by the steeper gradient. This suggests that the incorrect shock speed makes the dominant and slowest converging contribution to the residual for this problem when using {\bf Strategy 1}. We also conclude that the point to which we march in pseudo-time, here $\tau = 3 \dt$, have less of an impact on the convergence than the choice of pseudo-time steps used to reach this point.

\begin{figure}[t!]
    \centering
    \includegraphics[width=0.9\textwidth]{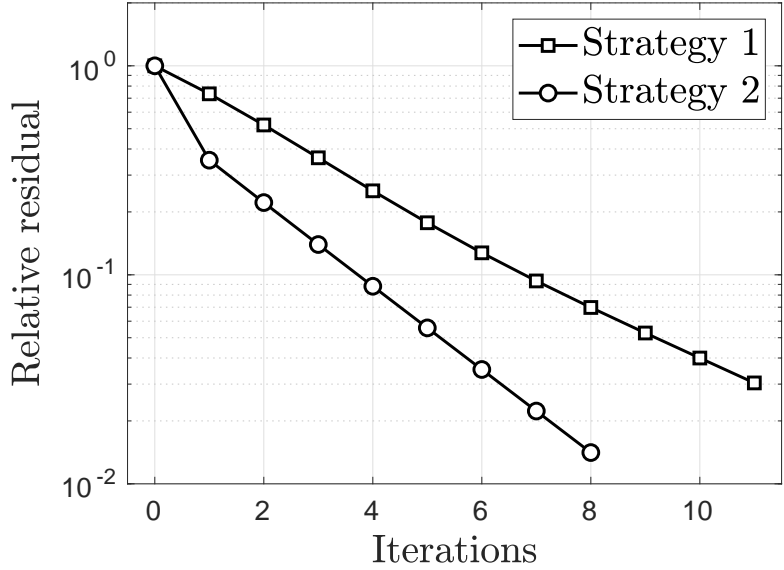}
    \caption{Relative residuals of pseudo-time iterates applied to the discretization \eqref{eq:Burgers_discretization} of Burgers' equation using {\bf Strategy 1} and {\bf Strategy 2}.}
    \label{fig:shock_residual}
\end{figure}


\subsubsection{The Euler equations}

As a second nonlinear problem we consider the 2D compressible Euler equations,
\begin{equation}
\begin{bmatrix}
\rho \\ \rho u \\ \rho v \\ \rho E
\end{bmatrix}_t
+
\begin{bmatrix}
\rho u \\
\rho u^2 + p \\
\rho u v \\
(\rho E + p) u
\end{bmatrix}_x
+
\begin{bmatrix}
\rho v \\
\rho u v \\
\rho v^2 + p \\
(\rho E + p) v
\end{bmatrix}_y
= 0,
\end{equation}
posed on the domain $(x,y) \in (-5, 15] \times (-5, 5]$. Here, $\rho, u, v, E$ and $p$ respectively denote density, horizontal and vertical velocity components, total energy per unit mass and pressure. The pressure is related to the other variables through the equation of state
$$
p = (\gamma-1) \rho e,
$$
where $\gamma = 1.4$ and $e = E - (u^2 + v^2)/2$ is the internal energy density. The domain is taken to be periodic in both spatial coordinates. The setting is the isentropic vortex problem \cite{shu1998essentially} with initial conditions
\begin{align*}
\rho_0 &= \left( \frac{1 - \epsilon^2 (\gamma - 1) M_\infty^2}{8 \pi^2} \exp{(r)} \right)^\frac{1}{\gamma - 1}, \\
u_0 &= 1 - \frac{\epsilon y}{2 \pi} \exp{(r/2)}, \\
v_0 &= \frac{\epsilon x}{2 \pi} \exp{(r/2)}, \\
p_0 &= \frac{\rho_0^\gamma}{\gamma M_\infty^2},
\end{align*}
where $r = 1 - x^2 - y^2$. Here, $\epsilon = 5$ is the circulation and $M_\infty = 0.5$ is the Mach number. As the solution evolves in time, the initial vortex propagates in the horizontal direction with unit speed.

As in previous experiments, we use implicit Euler in time, yielding a finite volume scheme of the form
\begin{equation} \label{eq:2D_discretization}
\frac{u_i^{n+1} - u_i^n}{\dt} + \frac{1}{\dx} \left( \hat{f}_{i+\frac{1}{2},j}^{n+1} - \hat{f}_{i-\frac{1}{2},j}^{n+1} \right) + \frac{1}{\dy} \left( \hat{f}_{i,j+\frac{1}{2}}^{n+1} - \hat{f}_{i,j-\frac{1}{2}}^{n+1} \right) = 0.
\end{equation}
Along the $x$-coordinate we use a fourth order centered flux
$$
\hat{f}_{i+\frac{1}{2},j} = -\frac{1}{12} f_{i-1,j} + \frac{7}{12} f_{i,j} + \frac{7}{12} f_{i+1,j} - \frac{1}{12} f_{i+2,j}
$$
and similarly along the $y$-coordinate. With this choice, the resulting problem violates the assumptions of Theorem \ref{thm:LW} in three ways: (i) It is 2D, (ii) it is a system of equations, and (iii) the scheme is not total variation bounded.

The exact solution of the isentropic vortex problem is given by the initial data (centred at the origin) translated to the point $(x,y) = (T,0)$, where $T$ is the end point of the time domain. However, the exact solution of the modified conservation law is instead centred at $(x,y) = (cT,0)$.

The explicit Euler method is again used as pseudo-time iteration. We study the convergence of the numerical solution to the exact solutions of the original and modified conservation laws by measuring the $L^2$ error of the density component. The discretizations are chosen such that $\dx = \dy = 4\dt$ on all grids. Two different strategies for choosing the pseudo-time steps are considered:
\begin{description}
\item[Strategy 1:] Use $N=9$ iterations with $\dtau_{0,\dots,8} = 0.2 \dt$.
\item[Strategy 2:] Use $N=5$ iterations with $\dtau_0 = \dt$ and $\dtau_{1,\dots,4} = 0.2 \dt$.
\end{description}
Both strategies integrate in pseudo-time to the point $\tau = 1.8 \dt$ in each time step. However, Theorem \ref{thm:LW} predicts that {\bf Strategy 1} will give a speed modification $c(\mu_0, \dots, \mu_8) \approx 0.866$. On the other hand, {\bf Strategy 2} will give $c(\mu_0, \dots, \mu_4) = 1$, i.e. the correct propagation speed, due to the large initial pseudo-time step. Fig. \ref{fig:Euler_speed_comparison} shows the numerical solutions at time $T=10$ for the case where $\dx = \dy = 0.2$, $\dt = 0.05$. The predicted and observed vortex locations agree very well.

\begin{figure}[t!]
    \centering
    \includegraphics[width=\textwidth]{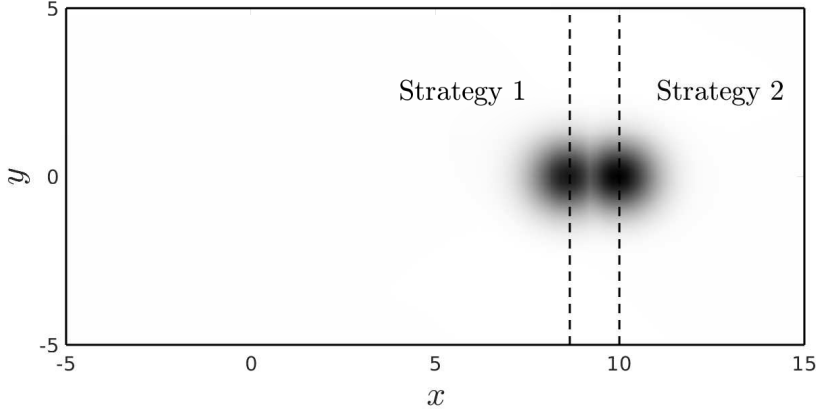}
    \caption{Computed density of the isentropic vortex at $t = 10$ using {\bf Strategy 1} and {\bf Strategy 2}. Dashed lines mark the vortex locations predicted by Theorem \ref{thm:LW}.}
    \label{fig:Euler_speed_comparison}
\end{figure}

Fig. \ref{fig:convergence} shows the convergence of the numerical solutions, measured at time $T=1$. Convergence to the correct solution is observed for {\bf Strategy 2} (S2) but not for {\bf Strategy 1} (S1). This is expected due to the incorrect location of the vortex in the latter case. However, convergence to the solution of the modified conservation law is seen (S1 mod). Thus, Theorem \ref{thm:LW} accurately predicts the behavior of the numerical solution despite the violated assumptions.

In practical applications, the pseudo-time iterations are terminated when the residual has decreased beneath some tolerance. It is interestig to see in what way the convergence is affected by the choice of strategy. Returning to the setting in Fig. \ref{fig:Euler_speed_comparison}, the residual in each pseudo-time iteration for all 200 physical time steps are shown for the two strategies in Fig. \ref{fig:residuals}. The 200 lines overlap nearly perfectly, suggesting that the residual behaves similarly in each physical time step. The initial iteration in {\bf Strategy 2} evidently has a large impact on the reduction of the residual that supercedes those of the other iterations put together. The remaining iterations appear to reduce the residual by comparable amounts for the two strategies, as seen by the similar gradients. In contrast to the shock problem considered previously, this suggests that the convergence rate of the residual is dictated by other factors than the propagation speed for this particular problem. Nonetheless, a correct propagation speed is visibly very beneficial, here with a drop in relative residual of more than an order of magnitude.

\begin{figure}[t!]
    \centering
    \begin{subfigure}{0.49\textwidth}
    \includegraphics[width=\textwidth]{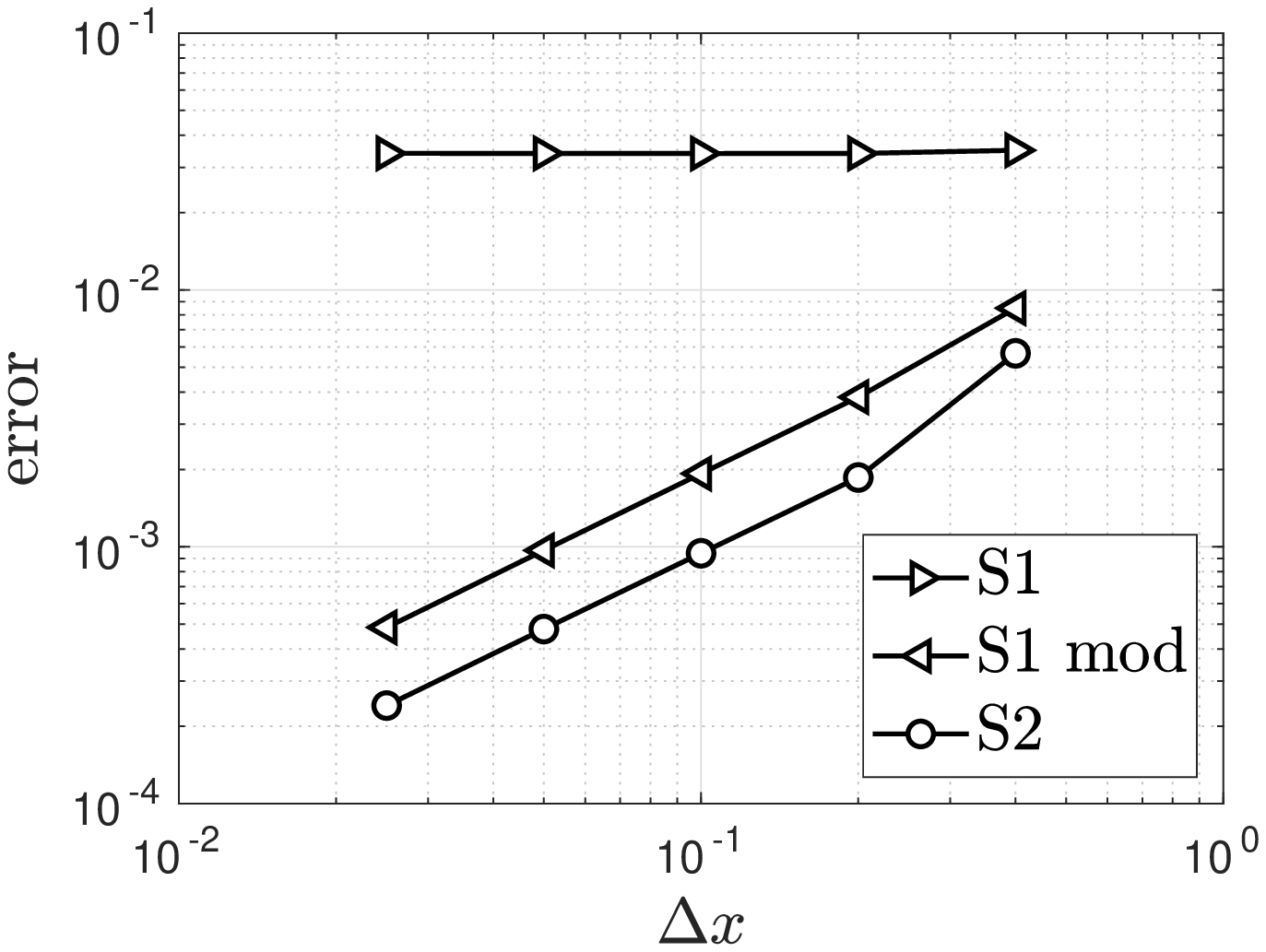}
    \caption{$L^2$-errors.}
    \label{fig:convergence}
    \end{subfigure}
    \begin{subfigure}{0.49\textwidth}
    \includegraphics[width=\textwidth]{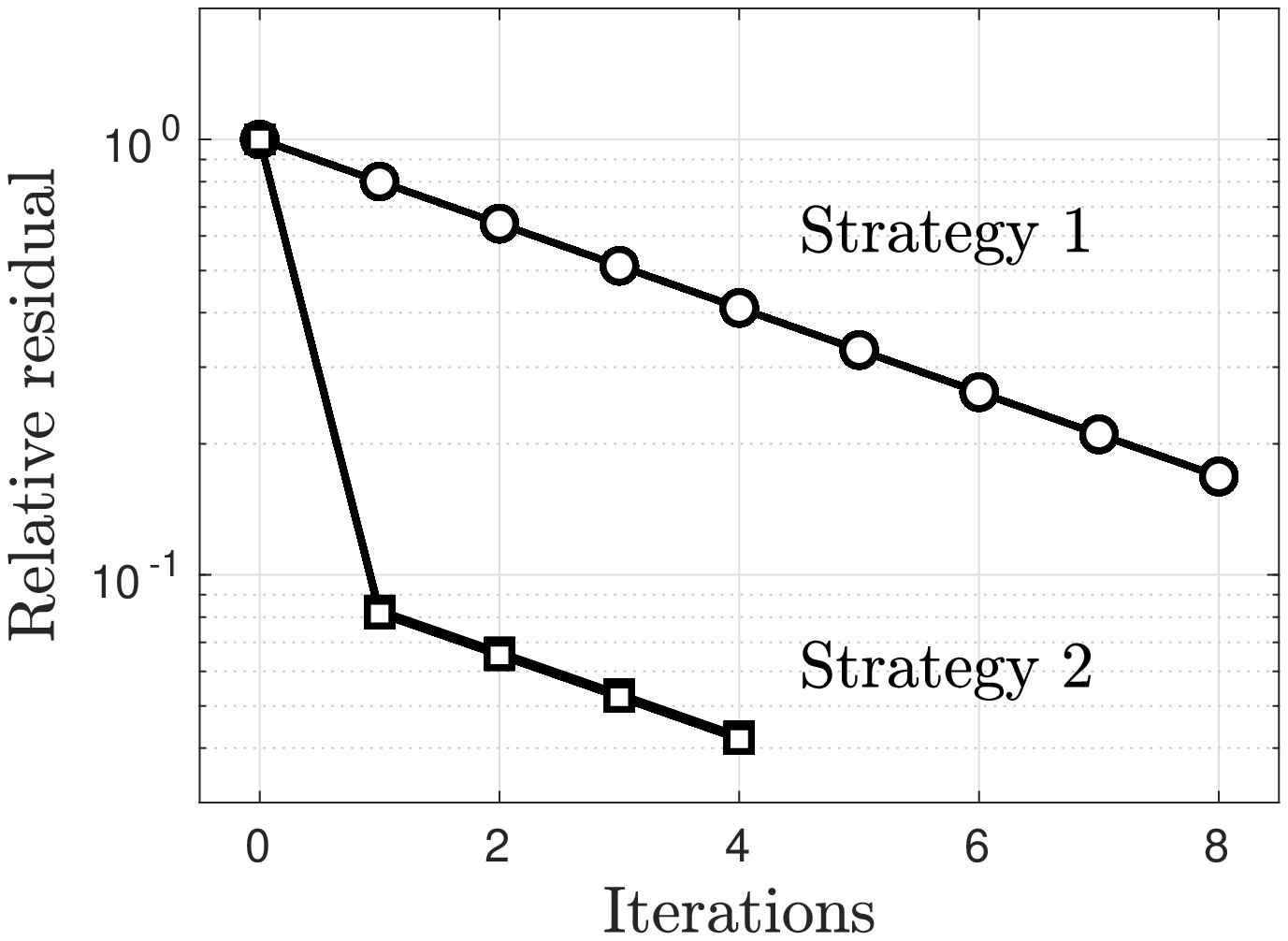}
    \caption{Relative residuals.}
    \label{fig:residuals}
    \end{subfigure}
    \caption{(a) $L^2$-error computed with respect to the exact solution using {\bf Strategy 1} (S1) and {\bf Strategy 2} (S2), and with respect to the modified conservation law (S1 mod). (b) Relative residuals at each iteration for 200 physical time steps.}
    \label{fig:Euler_error_and_residual}
\end{figure}


\section{Summary and Conclusions} \label{sec:conclusions}

In this paper we have studied conservation properties of a selection of iterative methods applied to 1D scalar conservation laws. The fact that conservation is a design principle behind many numerical schemes motivates such a study. We have established that Newton's method, the Richardson iteration, Krylov subspace methods, coarse grid corrections using agglomeration, as well as ERK pseudo-time iterations preserve the global conservation of a given scheme if the initial guess has correct mass. However, the Jacobi and Gauss-Seidel methods do not, unless the linear system being solved possesses particular properties.

The stronger requirement of local conservation has been investigated for ERK pseudo-time iterations. We have shown that local conservation is preserved for finite volume schemes that employ the implicit Euler method in time. However, the resulting modified numerical flux may be inconsistent with the governing conservation law. An extension of the Lax-Wendroff theorem shows that this inconsistency leads to convergence to a weak solution of a conservation law modified by a particular constant. We give an exact expression for the modification constant, which depends only on the stability function of the ERK method and the pseudo-time steps. Depending on the problem solved, this modification can alter both the propagation speed and amplitude of the numerical solution if the constant differs from unity. We present a strategy for ensuring that the constant equals one and show numerically that the strategy results in faster convergence. Experiments suggest that the results hold even for systems of conservation laws in multiple dimensions.


\bibliography{Manuscript.bib}


\appendix

\section{Proof of Lemma \ref{lemma:flux_form}} \label{appendix:proof}

The purpose of this appendix is to provide a detailed proof of Lemma \ref{lemma:flux_form}.

\begin{proof}
Note first that \eqref{eq:RK_step}-\eqref{eq:RK_stages} can be equivalently written in the form
\begin{equation} \label{eq:RK_matrix_vector_form}
    \begin{aligned}
        \tvec{U}_i^{(k)} &= \iter{k} \tvec{1} - \dtau_k \tmat{A} \tvec{g}_i(\tvec{\xvec{U}}^{(k)}), \\
        \iter{k+1} &= \iter{k} - \dtau_k \tvec{b}^\top \tvec{g}_i(\tvec{\xvec{U}}^{(k)}),
    \end{aligned}
    \qquad i = \dots, -1, 0, 1, \dots, 
\end{equation}
where $\tvec{1} = (1,\dots,1)^\top \in \mathbb{R}^s$. Here, we use the notation $\tvec{g}_i(\tvec{\xvec{U}}^k) = \left( g_i(\xvec{U}_1^{(k)}), \dots, g_i(\xvec{U}_s^{(k)}) \right)^\top$.

Suppose that for some $N \geq 1$ the relation
$$
\frac{\iter{N} - u_i^n}{\dt} + \frac{1}{\dx} \left( \hat{h}_{i+\frac{1}{2}}^{(N)} - \hat{h}_{i-\frac{1}{2}}^{(N)} \right) = 0
$$
holds. We will show that it also holds for $N+1$. From \eqref{eq:g-fun}, \eqref{eq:pseudo-time_conservative} and \eqref{eq:RK_matrix_vector_form} it follows that
\begin{equation*}
    \begin{alignedat}{3}
    \tvec{U}_i^{(N)} &= \iter{N} \tvec{1} &&- \dtau_N \tmat{A} \left[ \frac{\tvec{U}_i^{(N)} - u_i^n \tvec{1}}{\dt} + \frac{1}{\dx} \left( \hat{\tvec{f}}_{i+\frac{1}{2}}^{(N)} - \hat{\tvec{f}}_{i-\frac{1}{2}}^{(N)} \right) \right] \\
    &= u_i^n \tvec{1} &&- \frac{\dt}{\dx} \left( \hat{h}_{i+\frac{1}{2}}^{(N)} - \hat{h}_{i-\frac{1}{2}}^{(N)} \right) \tvec{1} \\
    &&&- \mu_N \tmat{A} \tvec{U}_i^{(N)} + \mu_N \tmat{A} \tvec{1} u_i^n - \frac{\dtau_N}{\dx} \tmat{A} \left( \hat{\tvec{f}}_{i+\frac{1}{2}}^{(N)} - \hat{\tvec{f}}_{i-\frac{1}{2}}^{(N)} \right).
    \end{alignedat}
\end{equation*}
Solving for $\tvec{U}_i^{(N)}$ gives
\begin{equation*}
    \begin{aligned}
    \tvec{U}_i^{(N)} = u_i^n \tvec{1} &- \frac{\dt}{\dx} \left( \hat{h}_{i+\frac{1}{2}}^{(N)} - \hat{h}_{i-\frac{1}{2}}^{(N)} \right) (\tmat{I} + \mu_N \tmat{A})^{-1} \tvec{1} \\
    &- \frac{\dt}{\dx} \mu_N (\tmat{I} + \mu_N \tmat{A})^{-1} \tmat{A} \left( \hat{\tvec{f}}_{i+\frac{1}{2}}^{(N)} - \hat{\tvec{f}}_{i-\frac{1}{2}}^{(N)} \right).
    \end{aligned}
\end{equation*}
Note that $(\tmat{I} + \mu_N \tmat{A})^{-1}$ exists since $\tmat{A}$ is lower triangular.

Evaluating $\tvec{g}_i \left( \tvec{\xvec{U}}^{(N)} \right)$ using \eqref{eq:g-fun} gives
\begin{equation*}
    \begin{aligned}
    \tvec{g}_i \left( \tvec{\xvec{U}}^{(N)} \right) &= \frac{1}{\dx} \left[-\left( \hat{h}_{i+\frac{1}{2}}^{(N)} - \hat{h}_{i-\frac{1}{2}}^{(N)} \right) (\tmat{I} + \mu_N \tmat{A})^{-1} \tvec{1} \right. \\
    & \qquad \quad \, \, \left. -\mu_N (\tmat{I} + \mu_N \tmat{A})^{-1} \tmat{A} \left( \hat{\tvec{f}}_{i+\frac{1}{2}}^{(N)} - \hat{\tvec{f}}_{i-\frac{1}{2}}^{(N)} \right) + \left( \hat{\tvec{f}}_{i+\frac{1}{2}}^{(N)} - \hat{\tvec{f}}_{i-\frac{1}{2}}^{(N)} \right) \right] \\
    &= \frac{1}{\dx} \left[ -\left( \hat{h}_{i+\frac{1}{2}}^{(N)} - \hat{h}_{i-\frac{1}{2}}^{(N)} \right) (\tmat{I} + \mu_N \tmat{A})^{-1} \tvec{1} \right. \\
    & \left. \qquad \quad \, \, +[\tmat{I} - \mu_N (\tmat{I} + \mu_N \tmat{A})^{-1}\tmat{A}] \left( \hat{\tvec{f}}_{i+\frac{1}{2}}^{(N)} - \hat{\tvec{f}}_{i-\frac{1}{2}}^{(N)} \right) \right] \\
    &= \frac{1}{\dx} (\tmat{I} + \mu_N \tmat{A})^{-1} \left[ \left( \hat{\tvec{f}}_{i+\frac{1}{2}}^{(N)} - \hat{\tvec{f}}_{i-\frac{1}{2}}^{(N)} \right) - \left( \hat{h}_{i+\frac{1}{2}}^{(N)} - \hat{h}_{i-\frac{1}{2}}^{(N)} \right) \tvec{1} \right].
    \end{aligned}
\end{equation*}
In the last equality we have used the fact that
\begin{equation} \label{eq:trick}
\tmat{I} - \mu_N (\tmat{I} + \mu_N \tmat{A})^{-1}\tmat{A} = (\tmat{I} + \mu_N \tmat{A})^{-1} [(\tmat{I} + \mu_N \tmat{A}) - \mu_N \tmat{A}] = (\tmat{I} + \mu_N \tmat{A})^{-1}.
\end{equation}
Inserting the above expression for $\tvec{g}_i \left( \tvec{\xvec{U}}^{(N)} \right)$ into $\iter{N+1}$ as given in \eqref{eq:RK_matrix_vector_form} leads to
\begin{equation*}
    \begin{alignedat}{3}
    \iter{N+1} &= \iter{N} &&- \dtau_N \tvec{b}^\top \tvec{g}_i \left( \tvec{\xvec{U}}^{(N)} \right) \\
    &= u_i^n &&- \frac{\dt}{\dx} \left( \hat{h}_{i+\frac{1}{2}}^{(N)} - \hat{h}_{i-\frac{1}{2}}^{(N)} \right) \\
    &&& -\frac{\dtau_N}{\dx} \tvec{b}^\top (\tmat{I} + \mu_N \tmat{A})^{-1} \left[ \left( \hat{\tvec{f}}_{i+\frac{1}{2}}^{(N)} - \hat{\tvec{f}}_{i-\frac{1}{2}}^{(N)} \right) - \left( \hat{h}_{i+\frac{1}{2}}^{(N)} - \hat{h}_{i-\frac{1}{2}}^{(N)} \right) \tvec{1} \right] \\
    &= u_i^n &&- \frac{\dt}{\dx} [1 - \mu_N \tvec{b}^\top (\tmat{I} + \mu_N \tmat{A})^{-1} \tvec{1}] \left( \hat{h}_{i+\frac{1}{2}}^{(N)} - \hat{h}_{i-\frac{1}{2}}^{(N)} \right) \\
    &&& -\frac{\dt}{\dx} \mu_N \tvec{b}^\top (\tmat{I} + \mu_N \tmat{A})^{-1} \left( \hat{\tvec{f}}_{i+\frac{1}{2}}^{(N)} - \hat{\tvec{f}}_{i-\frac{1}{2}}^{(N)} \right) \\
    &= u_i^n &&- \frac{\dt}{\dx} \left[ \phi(-\mu_N) \left( \hat{h}_{i+\frac{1}{2}}^{(N)} - \hat{h}_{i-\frac{1}{2}}^{(N)} \right) + \mu_N \tvec{b}^\top (\tmat{I} + \mu_N \tmat{A})^{-1} \left( \hat{\tvec{f}}_{i+\frac{1}{2}}^{(N)} - \hat{\tvec{f}}_{i-\frac{1}{2}}^{(N)} \right) \right].
    \end{alignedat}
\end{equation*}
Rearranging, using the induction hypothesis and the expression \eqref{eq:H-flux} for $\hat{h}^N$ results in
\begin{equation*}
    \begin{aligned}
    0 &= \frac{\iter{N+1} - u_i^n}{\dt} \\
    &+ \frac{1}{\dx} \left[ \phi(-\mu_N) \sum_{k=0}^{N-1} \mu_k \tvec{b}^\top (\tmat{I} + \mu_k \tmat{A})^{-1} \left( \prod_{l=k+1}^{N-1} \phi(-\mu_{l}) \right) \left( \hat{\tvec{f}}_{i+\frac{1}{2}}^{(k)} - \hat{\tvec{f}}_{i-\frac{1}{2}}^{(k)} \right) \right. \\
    &+ \left. \mu_N \tvec{b}^\top (\tmat{I} + \mu_N \tmat{A})^{-1} \left( \hat{\tvec{f}}_{i+\frac{1}{2}}^{(N)} - \hat{\tvec{f}}_{i-\frac{1}{2}}^{(N)} \right) \right] \\
    &= \frac{\iter{N+1} - u_i^n}{\dt} + \frac{1}{\dx} \sum_{k=0}^N \mu_k \tvec{b}^\top (\tmat{I} + \mu_k \tmat{A})^{-1} \left( \prod_{l=k+1}^N \phi(-\mu_{l}) \right) \left( \hat{\tvec{f}}_{i+\frac{1}{2}}^{(k)} - \hat{\tvec{f}}_{i-\frac{1}{2}}^{(k)} \right) \\
    &= \frac{\iter{N+1} - u_i^n}{\dt} + \frac{1}{\dx} \left( \hat{h}_{i+\frac{1}{2}}^{(N+1)} - \hat{h}_{i-\frac{1}{2}}^{(N+1)} \right).
    \end{aligned}
\end{equation*}

It remains to show that the lemma holds when $N=1$. To this end, recall that $\iter{0} = u_i^n$ and note from \eqref{eq:g-fun} and \eqref{eq:RK_matrix_vector_form} that
\begin{equation*}
    \begin{aligned}
    \tvec{U}_i^{(0)} &= u_i^n \tvec{1} - \dtau_0 \tmat{A} \left[ \frac{\tvec{U}_i^{(0)} - u_i^n \tvec{1}}{\dt} + \frac{1}{\dx} \left( \hat{\tvec{f}}_{i+\frac{1}{2}}^{(0)} - \hat{\tvec{f}}_{i-\frac{1}{2}}^{(0)} \right) \right] \\
    &= u_i^n \tvec{1} - \mu_0 \tmat{A} \tvec{U}_i^{(0)} + \mu_0 \tmat{A} \tvec{1} u_i^n - \frac{\dtau_0}{\dx} \tmat{A} \left( \hat{\tvec{f}}_{i+\frac{1}{2}}^{(0)} - \hat{\tvec{f}}_{i-\frac{1}{2}}^{(0)} \right).
    \end{aligned}
\end{equation*}
Solving for $\tvec{U}_i^{(0)}$ gives
$$
\tvec{U}_i^{(0)} = u_i^n \tvec{1} - \frac{\dtau_0}{\dx} (\tmat{I} + \mu_0 \tmat{A})^{-1} \tmat{A} \left( \hat{\tvec{f}}_{i+\frac{1}{2}}^{(0)} - \hat{\tvec{f}}_{i-\frac{1}{2}}^{(0)} \right).
$$
Using \eqref{eq:g-fun} it follows that
\begin{equation*}
    \begin{aligned}
    \tvec{g} \left( \tvec{\xvec{U}}^{(0)} \right) &= \frac{1}{\dx} \left[ -\mu_0 (\tmat{I} + \mu_0 \tmat{A})^{-1} \tmat{A} \left( \hat{\tvec{f}}_{i+\frac{1}{2}}^{(0)} - \hat{\tvec{f}}_{i-\frac{1}{2}}^{(0)} \right) + \left( \hat{\tvec{f}}_{i+\frac{1}{2}}^{(0)} - \hat{\tvec{f}}_{i-\frac{1}{2}}^{(0)} \right) \right] \\
    &= \frac{1}{\dx} [\tmat{I} -\mu_0 (\tmat{I} + \mu_0 \tmat{A})^{-1} \tmat{A}] \left( \hat{\tvec{f}}_{i+\frac{1}{2}}^{(0)} - \hat{\tvec{f}}_{i-\frac{1}{2}}^{(0)} \right) \\
    &= \frac{1}{\dx} (\tmat{I} + \mu_0 \tmat{A})^{-1} \left( \hat{\tvec{f}}_{i+\frac{1}{2}}^{(0)} - \hat{\tvec{f}}_{i-\frac{1}{2}}^{(0)} \right).
    \end{aligned}
\end{equation*}
Here we have once again used \eqref{eq:trick} in the final equality. Thus, $\iter{1}$ can be evaluated using \eqref{eq:RK_matrix_vector_form} as
$$
\iter{1} = u_i^n - \frac{\dtau_0}{\dx} \tvec{b}^\top (\tmat{I} + \mu_0 \tmat{A})^{-1} \left( \hat{\tvec{f}}_{i+\frac{1}{2}}^{(0)} - \hat{\tvec{f}}_{i-\frac{1}{2}}^{(0)} \right) = u_i^n - \frac{\dt}{\dx} \left( \hat{h}_{i+\frac{1}{2}}^{(1)} - \hat{h}_{i-\frac{1}{2}}^{(1)} \right).
$$
Division by $\dt$ and rearrangement shows that the lemma holds when $N=1$. By induction it holds for all $N \geq 1$.
\end{proof}


\end{document}